\documentclass[11pt]{article}
\usepackage[a4paper,margin=26mm]{geometry}
\usepackage[T1]{fontenc}
\usepackage[utf8]{inputenc}
\usepackage{lmodern}
\usepackage{amsmath,amssymb,amsthm,mathtools}
\usepackage{booktabs,longtable,array,tabularx}
\usepackage{enumitem}
\usepackage{xcolor}
\usepackage{hyperref}
\hypersetup{colorlinks=true,linkcolor=blue!70!black,urlcolor=blue!70!black,citecolor=blue!70!black}

\newtheorem{theorem}{Theorem}
\newtheorem{lemma}{Lemma}

\theoremstyle{definition}

\newtheorem{remark}{Remark}

\newcommand{\Zfive}{\mathbb Z_5}

\newcommand{\one}{\mathbf 1}

\newcommand{\Cay}{\operatorname{Cay}}

\title{Hamilton decompositions of the directed 5-torus for odd modulus}
\author{Sanghyun Park\\Yonsei University}
\date{April 2026}

\begin{document}
\maketitle

\begin{abstract}
We prove that the directed five-dimensional torus
\[
D_5(m)=\Cay\bigl((\mathbb Z_m)^5,\{e_0,e_1,e_2,e_3,e_4\}\bigr)
\]
has a Hamilton decomposition for every odd integer \(m\ge3\).  The construction assigns the five
outgoing generators by a cyclic layer schedule with one non-constant layer.  This layer is determined
by a zero-set Latin table, and an explicit finite exact-cover certificate proves that it is a matching.
By cyclic symmetry, Hamiltonicity of all color classes is reduced to a single normalized return map.
For \(m\ge5\), an explicit first-return calculation on the section \(p=2\) gives one induced cycle
with total excursion length \(m^4\).  The remaining modulus \(m=3\) is settled by a printed finite
cycle certificate.  A separate Lean 4 formalization verifies the final Cayley statement and the finite
certificates.
\end{abstract}

\section{Introduction}

A directed Hamilton decomposition of a regular digraph is a partition of its arc set into directed
Hamilton cycles.  For
\[
D_5(m)=\Cay\bigl((\mathbb Z_m)^5,\{e_0,e_1,e_2,e_3,e_4\}\bigr),
\]
the fifth Cartesian power of the directed cycle \(\vec C_m\), this means five directed cycles of
length \(m^5\) whose arcs are exactly the directed Cayley arcs \(x\to x+e_i\).  We prove the following
theorem.

\begin{theorem}[Main theorem]\label{thm:intro-main}
For every odd integer \(m\ge3\), the arc set of \(D_5(m)\) decomposes into five directed Hamilton
cycles.
\end{theorem}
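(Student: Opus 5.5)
The plan is to build the decomposition as a \emph{direction assignment}: a map \(\sigma\colon(\mathbb Z_m)^5\to S_5\), where at each vertex \(x\) color \(c\) uses the arc \(x\to x+e_{\sigma_x(c)}\). Because \(\sigma_x\) is a permutation at every vertex, each vertex has out-degree one in every color. The remaining requirements are in-degree one per color and a single cycle per color. Write \(S(x)=x_0+\dots+x_4\bmod m\) for the layer of \(x\). Every arc raises \(S\) by one, so each color class moves through the layers \(p=0,1,\dots,m-1\) cyclically. I would take \(\sigma_x\) to depend only on the layer \(p=S(x)\), apart from one distinguished layer. On the ordinary layers I would use a cyclic schedule: color \(c\) uses direction \(c+\tau(p)\) for a fixed shift \(\tau(p)\in\Zfive\). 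On the distinguished layer, \(\sigma_x\) is given by a Latin table indexed by a few coordinates of \(x\), namely by which coordinates vanish.

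The first step is to show that each color class is a permutation of the vertex set, i.e.\ a perfect matching between consecutive layers. On constant layers this is immediate, since a translation is a bijection. On the non-constant layer I would need injectivity of \(x\mapsto x+e_{\sigma_x(c)}\) for each color \(c\). The table depends only on the zero-set pattern of boundedly many coordinates, so there are finitely many cases. I would check them by an exact-cover argument: every target vertex must be hit exactly once, and this reduces to a finite table independent of \(m\).

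The second step uses the symmetry \(x\mapsto\) cyclic coordinate shift combined with \(c\mapsto c+1\). If the schedule is invariant under this symmetry, it maps color class \(c\) onto class \(c+1\), so it suffices to prove that color \(0\) is a Hamilton cycle. For that I would compute the first-return map \(R\) of color \(0\) to the section \(\{S(x)=2\}\), which has \(m^4\) points, each carrying an excursion of length \(m\) until it returns. The goal is to show that \(R\) is a single \(m^4\)-cycle. On the constant layers \(R\) is a fixed translation. The non-constant layer adds a correction at points where certain coordinates vanish. The structure of \(R\) would then be an odometer: a translation by a vector generating a large cyclic subgroup, with carries at the zero sets moving the orbit to the next coset. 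The cycle follows by induction on coordinates, where each level contributes a factor of \(m\) and oddness of \(m\) ensures that \(2\) is invertible, so the relevant step sizes are coprime to \(m\).

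The main obstacle is the return-map calculation. The corrections fire on lower-dimensional zero sets, and I expect small \(m\) to cause collisions between the different carry regions. So I would first prove the single-cycle property for \(m\ge5\), where those regions are disjoint. The case \(m=3\) (\(243\) vertices) would then be handled by directly listing the cycle, which gives a finite certificate.
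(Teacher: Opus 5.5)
Your outline has the same architecture as the paper's proof. It uses a cyclic layer schedule with a single non-constant layer governed by a zero-set Latin table, and an exact-cover check that this layer is a matching. It then uses cyclic symmetry to reduce to color \(0\) and the \(m\)-step return map on a layer of size \(m^4\). Finally it gives a structural argument for \(m\ge5\) and an explicit cycle for \(m=3\).

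But as a proof it has a genuine gap exactly where the content lies: you never specify the table. Whether the non-constant layer is a matching, and whether the return map is a single cycle, both depend entirely on which permutation is assigned to each zero-set. Until a table is fixed, there is no finite exact-cover statement to check and no return map to analyze. The paper's proof does three concrete things:
\begin{itemize}
\item it exhibits a specific table, with seven cyclic representatives giving the selector \(p(Z)\) on the 27 feasible zero-sets;
\item it verifies the 27-cell exact cover;
\item it studies the resulting normalized return \(G(w)=w+(-3,0,0,1,1)+e_{p(Z(w))}\).
\end{itemize}
The exact-cover check is also not literally independent of \(m\). The sum constraint on a layer makes some patterns with all coordinates in \(\{0,\pm1\}\) realizable only when \(m=3\), which is why the paper enumerates \(A_3\) separately.

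The single-cycle step is likewise an expectation rather than an argument. You posit that the return map will be an odometer amenable to induction on coordinates, with each level contributing a factor of \(m\). For the table that actually works this is not what happens; the paper explicitly contrasts dimension five with the odometer-type returns of dimension three. Instead, the proof passes to the first return \(\Phi\) of \(G\) on the two-parameter section \(\Sigma=\{w_0=w_3=0,\ w_4\ne0\}\), which has size \(m(m-1)\). The return times there are highly non-uniform, with \(m=2h+1\):
\begin{itemize}
\item \((h+1)m\), \(2(h+1)m\) or \((3h+2)m\) on the rows \(b\le m-2\);
\item \(m-1\) on most of the last row;
\item one long-wrap excursion of length \(m^3-(m-1)(m-2)\), which passes through the all-zero point and then sweeps the whole family \(E(u,v)\).
\end{itemize}
Computing these requires the block recurrence \(\Theta\), with its drift \(x\mapsto x-2\) and two seam branches. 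Hamiltonicity then follows from a splice lemma with two inputs. First, \(\Phi\) is one cycle: it advances the row \(b\) by one, and over a full turn of rows it acts as \(a\mapsto a+1\) on \(\{a\ne0\}\). This uses that \(s=a+b\) evolves by addition of \(h+1=2^{-1}\) in \(\mathbb Z_m\) with the zero residue spliced out. Second, the excursion lengths sum to exactly \(m^4\).

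Your guess that oddness enters through the invertibility of \(2\) is right. The mechanism, however, is this non-uniform first-return analysis, not a clean coordinate-by-coordinate odometer. To close the gap you would need a concrete table together with a return-map analysis of this kind.
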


Hamilton decompositions of products of cycles have a long undirected history.  Foregger~\cite{Foregger1978}
proved a product theorem for Hamilton decompositions of Cartesian products of cycles, extending the
classical two-cycle case.  For directed products, Trotter and Erd\H{o}s~\cite{TrotterErdos1978}
studied Hamiltonicity of products of two directed cycles, and Keating~\cite{Keating1985} determined
when such a product decomposes into two directed Hamilton cycles; for surveys see
\cite{WitteGallian1984,CurranGallian1996,AlspachBermondSotteau1990}.  Curran and
Witte~\cite{CurranWitte1985} showed that the Cartesian product of three or more nontrivial directed
cycles admits a directed Hamilton cycle.  The corresponding three-factor decomposition problem was
solved for \(d=3\) in~\cite{d3torus}, with independent constructions given by
Knuth~\cite{Knuth2026} and Aquino-Michaels~\cite{AquinoMichaels2026}.  The present paper extends the
return-map method of~\cite{d3torus} to dimension five for all odd moduli.

This is the first higher-dimensional case in which the return-map method requires a genuine zero-set
selector rather than an odometer-type low-layer correction.  The odd-modulus hypothesis is used in the
half-turn computation of the first-return map; the theorem does not assert an obstruction for even
modulus, only that a different table or splice mechanism is needed there.

The proof is a cyclic arc-coloring with one non-constant layer.  The non-constant layer is defined
by a zero-set Latin table.  A finite exact-cover certificate proves that this layer is a matching;
the remaining layers are translations.  After passing to the \(m\)-step return map on a root flat,
cyclic symmetry reduces the Hamiltonicity of the five color classes to one normalized return map
\[
G(w)=w-3q_0+q_3+q_{p(Z(w))}.
\]
For \(m\ge5\), the first return of \(G\) to the section \(p=2\) is explicit.  The induced return is a
single cycle, and the sum of its excursion lengths is \(m^4\), the size of the root flat.  The case
\(m=3\) is settled by the finite cycle certificate printed in Appendix~\ref{app:m3certificate}.

The paper proof is self-contained after the finite certificates printed in Appendices~\ref{app:matching-certificate} and~\ref{app:m3certificate}.  A companion Lean 4 formalization gives an independent machine verification; artifact details are recorded in Appendix~\ref{app:formal} and in~\cite{D5OddLean}.

\section{The root flat and return maps}\label{sec:root-return}

All color and direction indices are read in \(\Zfive=\{0,1,2,3,4\}\).  Let
\[
A_m=\left\{w=(w_0,w_1,w_2,w_3,w_4)\in(\mathbb Z_m)^5:\sum_{i=0}^4 w_i=0\right\}.
\]
For \(i=0,1,2,3\), set
\[
q_i=e_i-e_4,\qquad q_4=0.
\]
Thus \(|A_m|=m^4\).  In the original directed torus, put
\[
\sigma(x)=x_0+x_1+x_2+x_3+x_4\pmod m,
\qquad X_t=\{x:\sigma(x)=t\}.
\]
Every arc goes from \(X_t\) to \(X_{t+1}\).  We identify \(X_t\) with \(A_m\) by
\[
\iota_t:X_t\longrightarrow A_m,\qquad \iota_t(x)=x-t e_4.
\]
Thus \(\sum_i \iota_t(x)_i=0\).  In this coordinate system, using direction \(e_i\) in the torus adds
\[
\iota_{t+1}(x+e_i)-\iota_t(x)=e_i-e_4=q_i
\]
in \(A_m\).

A colored layer map for color \(c\) is denoted
\[
P_{t,c}:A_m\to A_m.
\]
The \(m\)-step return of color \(c\) is
\[
R_c=P_{m-1,c}\cdots P_{1,c}P_{0,c}:A_m\to A_m.
\]

\begin{lemma}[Return criterion]\label{lem:return}
Assume the layer maps \(P_{t,c}:A_m\to A_m\) are bijections.  Then the color class \(c\) is a directed
Hamilton cycle of \(D_5(m)\) if and only if \(R_c\) is a single \(m^4\)-cycle on \(A_m\).
\end{lemma}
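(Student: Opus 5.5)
We will view the color class \(c\) as the functional digraph \(F_c\) on \((\mathbb Z_m)^5\) in which each vertex \(x\in X_t\) has a single out-arc, namely the arc of color \(c\) leaving \(x\). Under the identification \(\iota_t\), the restriction of \(F_c\) to \(X_t\) is exactly \(P_{t,c}\).

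\textbf{Step 1: the color class is a permutation digraph.} Since every \(P_{t,c}\) is a bijection and the layers \(X_t\) partition the vertex set, \(F_c\) is a bijection of \((\mathbb Z_m)^5\). Hence every vertex has in- and out-degree one in color \(c\), and the color class is a disjoint union of directed cycles. It is then a directed Hamilton cycle if and only if \(F_c\) is a single cycle of length \(m^5\).

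\textbf{Step 2: compare the cycles of \(F_c\) with those of \(R_c\).} Every arc raises \(\sigma\) by \(1\). So a cycle of \(F_c\) through a point of \(X_0\) has length divisible by \(m\). Moreover, every cycle of \(F_c\) meets \(X_0\), because \(\sigma\) runs through all residues along any orbit. By construction, \(R_c\) (transported by \(\iota_0\)) is exactly the first-return map \(F_c^m\) restricted to \(X_0\).

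Take an \(F_c\)-cycle \(C\) of length \(L\). Then \(C\cap X_0\) is a single \(R_c\)-cycle of length \(L/m\). The reason is that the visits of \(C\) to \(X_0\) occur exactly at times that are multiples of \(m\), and \(R_c\) advances from one visit to the next. This sets up a bijection between cycles of \(F_c\) and cycles of \(R_c\), with lengths scaled by the factor \(m\).

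\textbf{Step 3: conclude.} The map \(F_c\) is a single \(m^5\)-cycle if and only if it has exactly one cycle. By Step 2 this happens if and only if \(R_c\) has exactly one cycle on \(A_m\), that is, if and only if \(R_c\) is a single \(m^4\)-cycle, using \(|A_m|=m^4\).

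The only mildly delicate point is the orbit–section correspondence in Step 2. It rests on two facts: every orbit meets \(X_0\), and successive visits are exactly \(m\) steps apart. Both follow at once from \(\sigma\) increasing by \(1\) along each arc. Everything else is bookkeeping with the identifications \(\iota_t\).
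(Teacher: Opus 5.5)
Your proof is correct and follows the same route as the paper. Bijective layer maps make the color class a disjoint union of directed cycles, and since every arc raises \(\sigma\) by one, sampling each orbit every \(m\) steps matches its cycles with those of \(R_c\) on \(X_0\cong A_m\), with lengths scaled by \(m\); your explicit cycle-to-cycle bijection just spells out the paper's ``observing an orbit every \(m\) steps'' step in more detail.
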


\begin{proof}
For a fixed color, every vertex has one outgoing edge and, by bijectivity of the layer maps, one
incoming edge.  Thus the color class is a disjoint union of directed cycles.  Since every colored edge
increases \(\sigma\) by one, observing an orbit every \(m\) steps gives precisely the cycle decomposition
of \(R_c\) on \(X_0\cong A_m\).  A return cycle of length \(\ell\) lifts to a directed cycle of length
\(m\ell\).  Therefore the color class is Hamiltonian exactly when \(R_c\) is one cycle of length
\(m^4\).
\end{proof}

\section{The zero-set Latin table}\label{sec:zero-set-table}

For \(w\in A_m\), define its zero-set
\[
Z(w)=\{i\in\Zfive:w_i=0\}.
\]
The layer-1 table uses
\[
S(w)=Z(w)-1=\{i-1:i\in Z(w)\}\subseteq\Zfive.
\]
Let \(\tau_s(c)=c+s\pmod 5\).  The table \(\Lambda_1(S)\in S_5\) is specified on representatives and
extended by cyclic equivariance:
\[
\Lambda_1(S+k)(a+k)=\Lambda_1(S)(a)+k\pmod 5.
\]
In the row notation \((p_0,p_1,p_2,p_3,p_4)\), the entry means \(\Lambda_1(S)(c)=p_c\).

\begin{table}[h]
\centering
\caption{Representative rows of the layer-1 zero-set Latin table \(\Lambda_1\).}\label{tab:lambda}
\begin{tabular}{c c}
\toprule
\(S\) & \(\Lambda_1(S)\) \\
\midrule
\(\varnothing\) & \((0,1,2,3,4)\) \\
\(\{0\}\) & \((0,1,3,2,4)\) \\
\(\{0,1\}\) & \((4,1,3,2,0)\) \\
\(\{0,2\}\) & \((4,1,3,0,2)\) \\
\(\{0,1,2\}\) & \((1,0,3,4,2)\) \\
\(\{0,1,3\}\) & \((4,3,0,2,1)\) \\
\(\{0,1,2,3,4\}\) & \((0,1,2,3,4)\) \\
\bottomrule
\end{tabular}
\end{table}

The cyclic extension of Table~\ref{tab:lambda} covers all root-flat zero-sets; by the root-flat
relation, the size-four case is vacuous, and the size-five case is the single point \(w=0\).  For
color 0 set
\[
p(Z)=\Lambda_1(Z-1)(0).
\]
The matching statement for this selector is the exact-cover certificate of Lemma~\ref{lem:exactcover}.

\section{Coloring schedules}\label{sec:schedules}

Let \(d_t(w,c)\in\Zfive\) be the direction used by color \(c\) at layer \(t\) and root point \(w\).
The layer map is
\[
P_{t,c}(w)=w+q_{d_t(w,c)}.
\]
The non-constant layer is \(t=1\).

For odd \(m\ge 5\), set
\[
\begin{aligned}
&d_0(w,c)=c,\\
&d_1(w,c)=\Lambda_1(S(w))(c),\\
&d_2(w,c)=c+3,\\
&d_3(w,c)=c+4,\\
&d_t(w,c)=c\qquad(4\le t\le m-1).
\end{aligned}
\tag{$\mathrm{Sch}_{\ge 5}$}
\]
For \(m=3\), set instead
\[
 d_0(w,c)=c+4,
\qquad d_1(w,c)=\Lambda_1(S(w))(c),
\qquad d_2(w,c)=c+3.
\tag{$\mathrm{Sch}_3$}
\]
In either schedule, every row \(c\mapsto d_t(w,c)\) is a permutation of \(\Zfive\).  Thus the outgoing
arcs at every vertex are partitioned by the five colors.

For later reference, the color-\(c\) arc set is explicitly
\[
E_c=\left\{\bigl(x,\ x+e_{d_{\sigma(x)}(\iota_{\sigma(x)}x,c)}\bigr):
x\in(\mathbb Z_m)^5\right\}.
\tag{$E_c$}
\]
For each fixed vertex \(x\), the five directions
\[
d_{\sigma(x)}(\iota_{\sigma(x)}x,c),\qquad c\in\Zfive,
\]
are exactly \(\Zfive\).  Hence the sets \(E_0,\ldots,E_4\) partition the directed Cayley arc set.
After Lemma~\ref{lem:matching}, each \(E_c\) is also a directed 1-factor, because the corresponding
layer maps are bijections.

\subsection{Cyclic equivariance on the root flat}\label{subsec:cyclic-equivariance}

By cyclic equivariance, it suffices to analyze color \(0\).

Let \(\rho_c:A_m\to A_m\) be the coordinate rotation
\[
(\rho_c w)_j=w_{j-c}\qquad (j\in\mathbb Z_5),
\]
so that zero-sets rotate by \(Z(\rho_c w)=Z(w)+c\).  The root-flat basis is not invariant term-by-term,
because the coordinate \(4\) is used in the identification with \(A_m\).  The precise correction is
\[
\rho_c(q_i)=q_{i+c}-q_{4+c}. \tag{$\mathrm{Rot}$}
\]
Thus the color transfer is affine, not a bare coordinate rotation.
For the non-constant layer put
\[
P_c(w)=w+q_{\Lambda_1(Z(w)-1)(c)} .
\]
The cyclic rule defining \(\Lambda_1\) gives
\[
\Lambda_1(Z(\rho_c w)-1)(c)=\Lambda_1((Z(w)-1)+c)(0+c)
       =\Lambda_1(Z(w)-1)(0)+c.
\]
Equivalently, for \(p=p(Z(w))\),
\[
P_c\rho_c(w)=\rho_cw+q_{p+c}
      =T_{q_{4+c}}\rho_c(w+q_p)=T_{q_{4+c}}\rho_cP_0(w).
\]
Thus
\[
P_c\rho_c=T_{q_{4+c}}\rho_c P_0. \tag{$P_c$}
\]
In particular, \(P_c\) is a bijection whenever \(P_0\) is.  The constant layers satisfy the same
coordinate-color equivariance directly, since adding \(q_i\) before applying \(\rho_c\) is the same as
adding \(q_{i+c}\) after applying \(\rho_c\), up to the layer-identification correction in
\textup{(\(\mathrm{Rot}\))}.

\section{Layer matching}\label{sec:matching-certificate}

All constant layers are translations.  It remains to prove bijectivity of the non-constant layer.  By
the cyclic equivariance relation \textup{(\(P_c\))}, it is enough to consider color \(0\):
\[
P(w)=P_0(w)=w+q_{p(Z(w))}.
\]

For \(i\in\Zfive\) and \(Z\subseteq\Zfive\), put
\[
\mathcal C_{Z,i}=\{y\in A_m:Z(y-q_i)=Z\}.
\]
The relevant finite statement is
\[
\#\{i\in\Zfive:p(Z(y-q_i))=i\}=1
\qquad(y\in A_m).
\tag{MC}
\]

\begin{lemma}[Exact-cover certificate]\label{lem:exactcover}
For every odd \(m\ge3\), the condition \textup{(MC)} holds.
\end{lemma}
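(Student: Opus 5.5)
The condition (MC) depends on $y$ only through the five zero-sets $Z(y-q_i)$, $i\in\Zfive$. My plan is to show that this $5$-tuple of zero-sets is determined by a bounded amount of local data of $y$, and that only finitely many such data patterns occur. Once that is done, the lemma becomes a finite check, uniform in odd $m\ge3$.

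\textbf{Step 1: compute the shifted zero-sets.} For $i\in\{0,1,2,3\}$ we have $y-q_i=y-e_i+e_4$, and $y-q_4=y$. Hence coordinate $j$ of $y-q_i$ is
\[
y_j-[j=i]+[j=4]\qquad\text{for } i\neq 4 .
\]
So whether coordinate $j$ vanishes in $y-q_i$ depends only on whether $y_j\in\{-1,0,1\}$ and, if so, on which of these values it takes. Define the type of $y$ as the vector
\[
\theta(y)\in\{-1,0,1,\ast\}^5,
\]
where $\ast$ means $y_j\notin\{-1,0,1\}$. Then $\theta(y)$ determines every $Z(y-q_i)$, and therefore determines whether (MC) holds at $y$.

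\textbf{Step 2: list the realizable types.} The only global constraint on $y$ is $\sum_j y_j=0$. For odd $m\ge5$, the values $-1,0,1$ are distinct and the complement $\{2,\dots,m-2\}$ is nonempty. A type is realizable exactly when one of the following holds:
\begin{itemize}
\item it has no $\ast$ entries and the entries sum to $0$ in $\mathbb Z$ (as $|{\rm sum}|\le5<m$ when $m\ge7$; for $m=5$ we must compare sums mod $5$);
\item it has exactly one $\ast$ entry, and the required value of that coordinate avoids $\{-1,0,1\}$ mod $m$;
\item it has at least two $\ast$ entries, which is always fine since $m-3\ge2$ values are available and we can adjust.
\end{itemize}
The small cases $m=5$ and $m=3$ need separate care. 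For $m=3$ there are no $\ast$ coordinates at all, so one can simply enumerate the $3^4=81$ points. For $m=5$, the sums wrap mod $5$. I would treat $m=3$ and $m=5$ by direct enumeration and $m\ge7$ by the uniform type list.

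\textbf{Step 3: verify the cover.} For each realizable type I would compute the five zero-sets $Z_i=Z(y-q_i)$. Then I evaluate $p(Z_i)=\Lambda_1(Z_i-1)(0)$ using Table~\ref{tab:lambda} together with cyclic equivariance, and count the indices $i$ with $p(Z_i)=i$. The cyclic symmetry does not directly reduce the number of types, because $q_4=0$ breaks the rotation. Still, the evaluation of $\Lambda_1$ reduces to rotating each $Z_i-1$ to its representative row. Organizing by the zero-set $Z(y)$ itself, i.e. by $i=4$, and the neighbouring $\pm1$ pattern keeps the casework manageable. This list of (type, unique index $i$) pairs is exactly the exact-cover certificate to be printed in Appendix~\ref{app:matching-certificate}.

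\textbf{Main obstacle.} The hard part is the bookkeeping of realizability near the sum constraint, especially types with zero or one $\ast$ entry, and the wrap-around for small $m$. I would first establish the general $m\ge7$ classification rigorously and show that it is independent of $m$. I would then settle $m=3$ and $m=5$ by finite enumeration, which is also what the Lean certificate checks.
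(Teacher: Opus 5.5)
Your proposal is correct and is essentially the paper's argument. The paper likewise observes that every predicate in (MC) only distinguishes the coordinate classes $0$, $1$, $-1$ and ``other''; it packages the resulting finite check as the 27 image cells $\mathcal C_{Z,p(Z)}$ of the appendix (pairwise disjointness plus exhaustion), with $m=3$ enumerated directly. Your explicit treatment of realizability under $\sum_j y_j=0$ is more careful than the paper's, though your claim that two $\ast$ entries are ``always fine'' needs the sumset $\{2,\dots,m-2\}+\{2,\dots,m-2\}=\mathbb Z_m$, which holds only for $m\ge7$ and fails for $m=5$ (there $\{2,3\}+\{2,3\}=\{4,0,1\}$); this is harmless, since you correctly send $m=5$ to direct enumeration.
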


\begin{proof}
Appendix~\ref{app:matching-certificate} expands the selector \(p(Z)\) on all 27 feasible zero-sets
and lists the corresponding image cells.  Lemma~\ref{lem:exactcoverappendix} is exactly the finite
assertion \textup{(MC)}.
\end{proof}

\begin{lemma}[Matching table lemma]\label{lem:matching}
For every odd \(m\ge3\), the map \(P:A_m\to A_m\), \(P(w)=w+q_{p(Z(w))}\), is a bijection.
Consequently every layer map \(P_{t,c}\) in the schedules above is a bijection.
\end{lemma}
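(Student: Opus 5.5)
The plan is to derive bijectivity of \(P\) directly from the exact-cover condition \textup{(MC)} of Lemma~\ref{lem:exactcover}, and then transfer it to every layer map using translations and the equivariance relation \textup{(\(P_c\))}.

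\textbf{Step 1: surjectivity of \(P\).} Fix \(y\in A_m\). A preimage of \(y\) under \(P\) is a point \(w\in A_m\) with \(w+q_{p(Z(w))}=y\). Writing \(i=p(Z(w))\), this forces \(w=y-q_i\) together with the consistency condition \(p(Z(y-q_i))=i\). Conversely, each index \(i\in\Zfive\) with \(p(Z(y-q_i))=i\) yields a preimage \(w=y-q_i\in A_m\); this point lies in \(A_m\) because \(q_i\in A_m\). So preimages of \(y\) correspond to such indices \(i\).

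\textbf{Step 2: injectivity of \(P\).} The correspondence in Step 1 is injective: distinct indices give distinct points \(y-q_i\), since the \(q_i\) are pairwise distinct. This holds for \(m\ge3\) because \(q_0,\dots,q_3\) and \(q_4=0\) differ in their coordinates. Hence
\[
|P^{-1}(y)|=\#\{i\in\Zfive:p(Z(y-q_i))=i\}.
\]
By \textup{(MC)} this count equals \(1\) for every \(y\). So \(P\) is a bijection; in fact surjectivity alone would already suffice on the finite set \(A_m\). This argument is routine once \textup{(MC)} is available, and the real combinatorial content sits in the appendix certificate behind Lemma~\ref{lem:exactcover}.

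\textbf{Step 3: the constant layers.} For \(t\ne1\) in either schedule, \(d_t(w,c)\) does not depend on \(w\). So \(P_{t,c}\) is the translation by a fixed vector \(q_{d}\) on \(A_m\), and translations are bijections.

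\textbf{Step 4: the non-constant layer for other colors.} For \(t=1\) and a general color \(c\), we have \(P_{1,c}=P_c\). Relation \textup{(\(P_c\))} gives
\[
P_c=T_{q_{4+c}}\rho_c P_0\rho_c^{-1},
\]
where \(\rho_c\) is a bijection of \(A_m\) (it preserves the coordinate sum) and \(T_{q_{4+c}}\) is a translation. Therefore \(P_c\) is a composition of bijections. The statement covers both schedules, \((\mathrm{Sch}_{\ge5})\) and \((\mathrm{Sch}_3)\), because both use the same layer-1 table.

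\textbf{Main obstacle.} The one point needing care is checking that the equivariance relation \textup{(\(P_c\))} holds as derived, including the affine correction \textup{(Rot)}. I would verify it once directly on \(q\)-vectors.
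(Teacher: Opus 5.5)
Your proposal is correct and follows essentially the same route as the paper. The preimages of \(y\) are exactly the points \(y-q_i\) with \(p(Z(y-q_i))=i\), and \textup{(MC)} gives exactly one such \(i\). Bijectivity then transfers to every \(P_{t,c}\) through \textup{(\(P_c\))} for the non-constant layer and the translation form of the constant layers. Your appeal to the distinctness of the \(q_i\) in Step~2 is harmless but not needed, since a preimage \(w\) already determines \(i=p(Z(w))\).
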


\begin{proof}
Let \(y\in A_m\).  If \(P(w)=y\), then \(w=y-q_i\) for \(i=p(Z(w))\).  Thus possible predecessors are
the five points \(y-q_i\), and \(y-q_i\) is a predecessor precisely when \(p(Z(y-q_i))=i\).
Lemma~\ref{lem:exactcover} gives exactly one such \(i\).  Hence \(P=P_0\) is bijective.  The affine
equivariance relation \textup{(\(P_c\))} gives bijectivity of every non-constant color layer, and the
other layers are translations.
\end{proof}

\section{Normalizing the return map}\label{sec:normalizing}

Write \(T_i(w)=w+q_i\).  For color \(c\), write
\[
P_c(w)=w+q_{\Lambda_1(Z(w)-1)(c)}.
\]
For odd \(m\ge5\), the color-\(c\) return under \((\mathrm{Sch}_{\ge5})\) is
\[
R_c=T_c^{m-4}T_{c+4}T_{c+3}P_cT_c
    =T_{-4q_c+q_{c+3}+q_{c+4}}P_cT_c.
\]
Define the normalized return
\[
G_c=T_cR_cT_c^{-1}=T_{-3q_c+q_{c+3}+q_{c+4}}P_c. \tag{$G_c$}
\]
For \(m=3\), the schedule \((\mathrm{Sch}_3)\) gives
\[
R_c=T_{c+3}P_cT_{c+4},
\]
and conjugation by \(T_{c+4}\) gives the same displayed formula \textup{(\(G_c\))}, because
\(-3q_c=0\) in \(A_3\).  Thus \(G_c\) is conjugate to \(R_c\) for every odd \(m\ge3\).

For \(c=0\), \(P_0=P\) and \textup{(\(G_c\))} becomes
\[
\boxed{G(w)=G_0(w)=w-3q_0+q_3+q_{p(Z(w))}.}
\tag{$G$}
\]
Equation~\textup{(\(\mathrm{CG}\))} is an immediate consequence of the correction term in
\textup{(\(\mathrm{Rot}\))}.  Namely, \textup{(\(P_c\))} and \textup{(\(\mathrm{Rot}\))} imply
\[
G_c\rho_c=\rho_cG_0. \tag{$\mathrm{CG}$}
\]
Hence every normalized color return \(G_c\) is coordinate-rotation conjugate to \(G=G_0\).

In coordinates, for \(p=p(Z(w))\),
\[
G(w)=w+B+e_p,
\qquad B=(-3,0,0,1,1).
\tag{$G_1$}
\]
Explicitly,
\[
\Delta w_0=-3+\one_{p=0},\quad
\Delta w_1=\one_{p=1},\quad
\Delta w_2=\one_{p=2},
\]
\[
\Delta w_3=1+\one_{p=3},\quad
\Delta w_4=1+\one_{p=4}.
\tag{$G_2$}
\]

\section{The \texorpdfstring{\(p=2\)}{p=2} section and first return}\label{sec:first-return}

From Table~\ref{tab:lambda}, \(p(Z)=2\) occurs exactly for
\[
Z=\{0,3\},\qquad Z=\{0,1,3\},\qquad Z=\{0,2,3\}.
\]
Using the root-flat relation, this is equivalent to
\[
p(Z(w))=2\iff w_0=0,\quad w_3=0,\quad w_4\ne0.
\]
Thus the \(p=2\) section is
\[
\Sigma=\{w\in A_m:p(Z(w))=2\}
=
\{w(a,b)=(0,a,b,0,-a-b):a+b\ne0\}.
\]
Hence \(|\Sigma|=m(m-1)\).  In the rest of Sections~\ref{sec:first-return} and
\ref{sec:induced-cycle}, assume \(m\ge5\) and put \(m=2h+1\), so \(h=(m-1)/2\ge2\).

For \(w(a,b)\in\Sigma\), let \(\ell(a,b)\) be the first positive return time to \(\Sigma\) under \(G\),
and let \(\Phi(a,b)=(a',b')\) be the corresponding first-return point.

\begin{lemma}[First-return table]\label{lem:firstreturn}
Assume \(m\ge5\).  Let all coordinates be read in \(\mathbb Z_m\), and write
\(s=a+b\in\{1,\ldots,2h\}\).  The following first-return table holds.

\medskip\noindent
\textbf{Case I: \(0\le b\le m-2\).}
\[
b'=b+1,
\qquad
 a'=\begin{cases}
 a,&s=h,\\
 a+h,&s\ne h.
 \end{cases}
\]
The return time is
\[
\ell(a,b)=
\begin{cases}
(h+1)m, & 1\le s\le h-1,\\
2(h+1)m, & s=h,\\
(3h+2)m, & h+1\le s\le 2h.
\end{cases}
\]

\medskip\noindent
\textbf{Case II: \(b=m-1\).}
Here \(a=1\) is excluded because \(a+b=0\).  Moreover
\[
\Phi(0,m-1)=(1,0),
\qquad
\Phi(a,m-1)=(a,0)\quad(a\ne0,1),
\]
and
\[
\ell(0,m-1)=m^3-(m-1)(m-2),
\qquad
\ell(a,m-1)=m-1\quad(a\ne0,1).
\]
\end{lemma}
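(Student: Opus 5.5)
The plan is a direct orbit computation. The first step is to make the selector explicit. Using the cyclic rule \(\Lambda_1(S+k)(a+k)=\Lambda_1(S)(a)+k\), I would expand \(p(Z)=\Lambda_1(Z-1)(0)\) from Table~\ref{tab:lambda} on all \(27\) feasible zero-sets. This is the same expansion as in Appendix~\ref{app:matching-certificate}. I would record, for each \(Z\), the increment \(B+e_{p(Z)}\) from \textup{(\(G_1\))}. The key observation is that \(p(\varnothing)=0\). So at a generic point (no zero coordinate) the map \(G\) acts by the fixed translation \(\delta=(-2,0,0,1,1)\). Along such stretches \(w_1,w_2\) are frozen, \(w_3,w_4\) advance in lockstep, and \(w_0\) moves by \(-2\). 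Since \(m=2h+1\) is odd, \(-2\) is invertible, with \((-2)^{-1}=h\) in \(\mathbb Z_m\). This is the half-turn that makes every hitting time explicitly solvable.

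The orbit from \(w(a,b)\in\Sigma\) then decomposes into generic runs separated by \emph{events}, meaning steps at which \(Z(w)\neq\varnothing\). I would track it as a sequence of runs.

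\begin{enumerate}[label=(\roman*)]
\item One \(p=2\) step produces \((-3,a,b+1,1,1-a-b)\).
\item For each generic run, compute the first time \(k\) at which one of \(w_0-2k\), \(w_3+k\), \(w_4+k\) vanishes. Note that \(w_1,w_2\) cannot newly vanish during a run. Each such time is a linear congruence solved with the inverse \(h\).
\item At each event, read off \(p\) from the expanded table and apply \(B+e_p\).
\end{enumerate}

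Which coordinate hits zero first depends on the residue \(s=a+b\) relative to \(h\). This is where the three Case~I branches \(1\le s\le h-1\), \(s=h\) and \(h+1\le s\le 2h\) come from. I expect the return times \((h+1)m\), \(2(h+1)m\) and \((3h+2)m\) to arise as sums of full generic runs of length \(m\) or \(hm\)-type blocks. Returning to \(\Sigma\) requires \(w_0=w_3=0\) simultaneously with \(w_4\neq0\). I would verify in each branch that the last event is the one producing the \(p=2\) configuration, and that \(w_2\) has gained exactly \(1\) (so \(b'=b+1\)). I would also check that \(w_1\) has gained \(0\) or \(h\). The gain in \(w_1\) comes from how many \(p=1\) events occurred, and \(h\) plausibly arises as a count weighted by the half-turn.

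Case~II with \(a\neq0,1\) should be a short computation: starting from \(b=m-1\), the step makes \(w_2=0\). I expect \(m-1\) steps with the \(w_2=0\) zero-set carrying the orbit back to \(b'=0\).

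The main obstacle is \(\Phi(0,m-1)=(1,0)\) with \(\ell=m^3-(m-1)(m-2)\). Here the orbit leaves the region with \(w_1,w_2\) frozen and must sweep through many intermediate flats. My plan is to introduce an auxiliary section, for instance \(\{w_0=w_3=0\}\) with \(w_1=0\), or the set of points where \(p=1\) fires. I would compute the first-return map to that section, which should be an odometer-like increment of \((w_2,w_4)\) with a uniform time. I would then sum the times over one full cycle of the auxiliary odometer, subtracting the \((m-1)(m-2)\) contribution of the points where the orbit would otherwise pass through \(\Sigma\), and the \(p=2\) condition fails along the sweep. If the bookkeeping does not close cleanly, I would fall back on an induction in the second coordinate, proving a parametrized intermediate lemma for the return to each level \(w_2=\mathrm{const}\).
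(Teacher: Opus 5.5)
There is a genuine gap. Your overall strategy is the paper's: follow the orbit by hand, using that \(p(\varnothing)=0\) gives the translation \((-2,0,0,1,1)\) and that \(2\) is invertible mod \(m\). Your first step \((-3,a,b+1,1,1-s)\) is also right. But the proposal never reaches the reduction that makes Case~I a finite computation, and the mechanism you give for its three branches is not the real one.

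\textbf{Case~I and the short last row.} The branches do not come from which coordinate vanishes first in a run. Vanishing of \(w_0\) or \(w_1\) changes nothing, since \(p=0\) on \(\{0\},\{1\},\{0,1\}\). The first block has the same word \(2,0^{s-1},1,0^{m-s-1}\) for every \(s\): \(w_4\) vanishes before \(w_3\), and \(p=1\) fires there. Moreover \(w_2=b+1\neq0\) for the whole excursion, so no \(p=3\) step occurs, and \(w_3\) vanishes exactly every \(m\) steps. The missing idea is to sample at these block boundaries. The state is then \((x,y,b+1,0,z)\), and one block acts on \((x,z)\) by the three-branch map \(\Theta\): \((x,-1)\mapsto(x-1,0)\), \((0,0)\mapsto(-1,0)\), otherwise \((x,z)\mapsto(x-2,z+1)\), starting from \((-2,-s)\). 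The threshold \(h\) appears only after \(s-1\) generic blocks and the seam, at \((-2s-1,0)\). Reaching \(x=0\) then takes \(r\equiv h-s\) further blocks, and the three cases give the three return times:
\begin{itemize}
\item \(r>0\);
\item \(r=0\), landing on \((0,0)\notin\Sigma\) and hence a second seam;
\item the wrap \(r=m+h-s\).
\end{itemize}
The \(w_1\)-gain \(h\), \(2h+1\equiv0\), \(3h+1\equiv h\) is the number of \(p=1\) steps. Likewise, in the short last row the length \(m-1\) comes from one \(p=3\) step, at zero set \(\{2,4\}\) or \(\{0,2,4\}\) after \(a-2\) steps. That step is what makes \(w_0\) and \(w_3\) vanish together at step \(m-1\).

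\textbf{The long wrap.} This is where the gap is serious. You rightly flag it as the main obstacle, but your predicted structure is wrong. Since \(p=2\) fires only on \(\Sigma\), \(w_2\) is frozen at \(b+1=0\) for the whole excursion. So there is no odometer in \((w_2,w_4)\), and an induction over levels \(w_2=\mathrm{const}\) has nothing to induct on. Your first candidate section \(\{w_0=w_1=w_3=0\}\) meets the excursion only at its start and at the all-zero point. Your second candidate, the \(p=1\) points, is the right one. After the initial word \(2,4^{m-1},0,4^{m-1}\), these are exactly the \((m-1)^2\) points \(E(u,v)=(u,v,0,0,-u-v)\) with \(u\neq0\), \(u+v\neq0\), starting at \(E(1,0)\).

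The induced map, however, is an odometer in \((w_0,w_1)=(u,v)\) with three different times, not a uniform one:
\begin{itemize}
\item \(m\) when \(v+1=0\), a \(p=4\) run;
\item \(m-1\) generically, with one \(p=3\) step;
\item \(3m-2\) when \(v+1=-u\).
\end{itemize}
In the last case, the first \(p=3\) step makes \(w_3\) jump over \(0\). Then \(w_3\) and \(w_4\) vanish together at a point with zero set \(\{2,3,4\}\), where the selector is \(3\) again. The condition \(w_3=0\), \(w_4\neq0\) is reached only after \(3m-3\) steps, with \(u\) increased by one; from \(E(m-1,0)\) this lands at \(w(1,0)\).

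The length is then \(2m+(m-2)m+(m-1)(3m-2)+(m-2)^2(m-1)=m^3-(m-1)(m-2)\). This is a plain sum, not a subtraction of near-visits to \(\Sigma\). Without these three transitions and their first-passage checks, neither \(\Phi(0,m-1)=(1,0)\) nor the stated length is established.
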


\subsection{Block recurrence and case analysis}

We now prove the first-return table from the displayed zero-set table.  A power such as \(0^r\)
denotes a repeated direction word and is empty when \(r=0\).  Since \(G^m\) is the state after one full
layer-cycle of color 0, the following lemma records the first such block from a normal row.

\begin{lemma}[The first block from a normal row]\label{lem:firstblock}
Assume \(0\le b\le m-2\), put \(s=a+b\ne0\), and set \(B=b+1\).  Then
\[
G^m w(a,b)=(-2,\ a+1,\ B,\ 0,\ -s).
\]
\end{lemma}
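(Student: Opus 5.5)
The plan is a direct orbit computation. I track the $m$ iterates $w^{(k)}=G^k w(a,b)$, $0\le k\le m$, using the coordinate form $(G_1)$–$(G_2)$, and I show that the selector $p$ takes the value $2$ once, $1$ once, and $0$ at the remaining $m-2$ steps. The target already forces these counts. Summing $(G_2)$ over $m$ steps, the claimed net displacement $(-2,1,1,0,0)$ requires $n_0\equiv -2$, $n_1\equiv 1$, $n_2\equiv1$, $n_3\equiv n_4\equiv 0 \pmod m$, where $n_j$ is the number of steps with $p=j$. With $\sum n_j=m$ this gives exactly $n_0=m-2$, $n_1=n_2=1$, $n_3=n_4=0$. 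The proof therefore reduces to identifying the zero-set seen at each step.

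\textbf{Step 1 (the first step).} Since $w(a,b)\in\Sigma$, we have $p=2$. Hence $w^{(1)}=(-3,\,a,\,b+1,\,1,\,1-s)$.

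\textbf{Step 2 (the generic regime).} I first read off the singleton values from Table~\ref{tab:lambda} by cyclic equivariance:
\[
p(\varnothing)=0,\quad p(\{0\})=0,\quad p(\{1\})=0,\quad p(\{2\})=0,\quad p(\{3\})=4,\quad p(\{4\})=1.
\]
As long as $p=0$, the step is $\Delta=(-2,0,0,1,1)$. This gives
\[
w^{(1+k)}=(-3-2k,\ a,\ B,\ 1+k,\ 1-s+k).
\]
The times at which a coordinate can vanish are then explicit.
\begin{itemize}[nosep]
\item $w_0$ vanishes exactly once, at $k=h-1$. This is where oddness of $m$ enters: $2k\equiv-3$ has the unique solution $k=(m-3)/2$.
\item $w_3$ vanishes only at $k=m-1$, which lies outside the window $0\le k\le m-2$.
\item $w_4$ vanishes at $k=s-1\in\{0,\dots,m-2\}$.
\item $w_1$ and $w_2$ vanish for all $k$ if $a=0$ or $B=0$ respectively.
\end{itemize}
At $k=s-1$ the zero-set contains $4$ and not $3$, so the plan is to show that $p=1$ there. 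The step then becomes $(-3,1,0,1,1)$, after which $w_4$ keeps increasing from $1$ and the $p=0$ regime resumes with $w_1=a+1$. Every other step should have a zero-set drawn from subsets of $\{0,1,2\}$, and on those I need $p=0$.

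\textbf{Step 3 (the coincidence cases, the main obstacle).} I will then verify from the cyclic extension of Table~\ref{tab:lambda} that $p(Z)=0$ for every zero-set $Z\subseteq\{0,1,2\}$ that occurs. It suffices to take $Z\ne\varnothing$ with at most one of the fixed zeros $a=0$, $B=0$ (they cannot both hold since $s\ne0$) and possibly $w_0=0$. I also need $p(Z)=1$ for $Z=\{4\}\cup Z'$ with $Z'$ of the same kind. A few extra coincidences must be handled as well:
\begin{itemize}[nosep]
\item $w_0=0$ and $w_4=0$ at the same step, when $s=h$;
\item the shift $w_1\colon a\mapsto a+1$ after the $p=1$ step, which may create or destroy the zero $w_1=0$ (the cases $a=0$ and $a=-1$);
\item points near $w=0$, where the root-flat relation removes some zero-sets.
\end{itemize}
I expect this bookkeeping to be the delicate part, since each coincidence needs the corresponding table row ($\{0,1\}$, $\{0,2\}$, $\{0,1,2\}$, $\{0,1,3\}$ and their rotates) to return the right value. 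I would organize it as a small table of all zero-sets $Z\subseteq\{0,1,2,4\}$ that can actually occur, together with their $p$-values.

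\textbf{Step 4 (conclusion).} Summing the displacements gives
\[
w^{(m)}=(-3-2(m-2)-3,\ a+1,\ B,\ 1+(m-1),\ 1-s+(m-1)) = (-2,\ a+1,\ B,\ 0,\ -s)\pmod m.
\]
Here the first coordinate is $-2m-2\equiv-2$, and the last coordinate receives $+1$ from each of the $m-1$ remaining steps. This is the claimed value of $G^m w(a,b)$.
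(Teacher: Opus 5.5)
Your skeleton is the paper's: you show the block word is \(2,0^{s-1},1,0^{m-s-1}\) and then sum the displacements. Your Step~4 arithmetic and the forced counts \(N=(m-2,1,1,0,0)\) are correct. The gap is in Step~3, which carries the entire content of the lemma. You leave it unexecuted, and you also scope it incorrectly.

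You never use that \(B=b+1\in\{1,\dots,m-1\}\) is nonzero, which follows from \(b\le m-2\). Instead you allow \(B=0\), and plan to verify \(p(Z)=0\) for \(Z\subseteq\{0,1,2\}\) and \(p(\{4\}\cup Z')=1\) with \(2\) possibly in \(Z'\). That verification would fail. By Table~\ref{tab:selector}, \(p(\{2,4\})=p(\{0,2,4\})=3\) and \(p(\{1,2\})=p(\{0,1,2\})=4\). Your reason for excluding \(a=0\) and \(B=0\) simultaneously is also wrong: \((a,b)=(0,-1)\) has \(s=-1\neq0\). The row \(b=m-1\), where \(B=0\), genuinely behaves differently, since a \(p=3\) step occurs there (Lemma~\ref{lem:shortlastrow}). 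The hypothesis \(b\le m-2\) is exactly what removes it.

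The repair is a short induction that disposes of all your ``coincidence cases'' at once. Suppose step \(0\) had \(p=2\) and steps \(1,\dots,n-1\) had \(p\in\{0,1\}\). Then \(w^{(n)}_2=B\neq0\), \(w^{(n)}_3=n\) and \(w^{(n)}_4=n-s\), because \(\Delta w_2=\one_{p=2}\), \(\Delta w_3=1+\one_{p=3}\) and \(\Delta w_4=1+\one_{p=4}\). For \(1\le n\le m-1\) this gives \(w^{(n)}_3\neq0\), and \(w^{(n)}_4=0\) only at \(n=s\). Hence \(Z(w^{(n)})\) is one of \(\varnothing,\{0\},\{1\},\{0,1\}\) when \(n\neq s\), and one of \(\{4\},\{0,4\},\{1,4\},\{0,1,4\}\) when \(n=s\). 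The table gives \(p=0\) on the first four and \(p=1\) on the last four, which closes the induction.

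On these eight rows \(p\) does not depend on whether \(0\) or \(1\) lies in \(Z\). So you never need to locate the zeros of \(w_0\) or \(w_1\), and the cases \(s=h\), \(a\in\{0,-1\}\) and ``near \(w=0\)'' need no separate treatment. Oddness of \(m\) is not used in this lemma at all. Your claim that \(w_0\) vanishes exactly once is also inaccurate: your formula only holds before the \(p=1\) step, and afterwards \(w_0\) can vanish again at \(n=m-1\). With \(B\neq0\) supplied, your plan becomes exactly the paper's argument.
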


\begin{proof}
At \(w(a,b)\) the zero-set contains \(0\) and \(3\), and \(w_4=-s\ne0\), so the table gives \(p=2\).
After this first step the fifth coordinate is \(1-s\), while \(w_0\ne0\), \(w_2=b+1\ne0\), and
\(w_3=1\).  During the following run before \(w_4\) becomes zero, we have
\(w_2\ne0\), \(w_3\ne0\), and \(w_4\ne0\).  The only coordinates that may vanish are \(w_0\) and
\(w_1\).  Hence the possible zero-sets are
\[
\varnothing,\qquad \{0\},\qquad \{1\},\qquad \{0,1\},
\]
and Table~\ref{tab:lambda} gives \(p=0\) in all four cases.  Thus the selector remains
\(p=0\) until the fifth coordinate first becomes zero, which occurs after \(s-1\) such steps.  At that
moment \(w_2=b+1\) and \(w_3=s\) are both nonzero, so the possible zero-sets are only
\(\{4\}\), \(\{0,4\}\), \(\{1,4\}\), and \(\{0,1,4\}\); the cyclic extension of Table~\ref{tab:lambda} gives
\(p=1\) in all four rows.  After the
\(p=1\) step the fifth coordinate is \(1\), and the remaining \(m-s-1\) steps again have selector
\(p=0\).  Hence the direction word is
\[
2,\quad 0^{s-1},\quad 1,\quad 0^{m-s-1}.
\]
Thus, in this block,
\[
(N_0,N_1,N_2,N_3,N_4)=(m-2,1,1,0,0).
\]
Using the displacement formula
\[
(-3\ell+N_0,\ N_1,\ N_2,\ \ell+N_3,\ \ell+N_4),\qquad \ell=m,
\]
gives the displayed state.
\end{proof}

We next study only the states that occur at multiples of \(m\) after this first block.  Write such a
state as
\[
Y=(x,y,B,0,z),\qquad B\ne0.
\]
As long as \(Y\notin\Sigma\), the next full block is governed by a two-coordinate map on \((x,z)
\in\mathbb Z_m^2\).

\begin{lemma}[Normal-row block recurrence]\label{lem:blockrecurrence}
Let \(Y=(x,y,B,0,z)\), with \(B\ne0\), be outside \(\Sigma\).  Then
\[
G^mY=(x',y',B,0,z')
\]
where
\[
(x',z')=
\Theta(x,z):=
\begin{cases}
(x-1,0),& z=-1,\\
(-1,0),& x=0,\ z=0,\\
(x-2,z+1),&\text{otherwise.}
\end{cases}
\tag{$\Theta$}
\]
Moreover the \(y\)-coordinate is unchanged in the first case and increases by \(1\) in the other two
cases.
\end{lemma}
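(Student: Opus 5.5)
Starting from \(Y=(x,y,B,0,z)\) with \(B\ne0\), I will follow \(m\) consecutive steps of \(G\), recording the selector \(p\) at each one. This produces a direction word, from which the counts \(N_0,\dots,N_4\) give the block displacement
\[
(-3m+N_0,\ N_1,\ N_2,\ m+N_3,\ m+N_4),
\]
exactly as in Lemma~\ref{lem:firstblock}. Since \(N_3=N_4=0\) in every case, the entries \(m+N_3\) and \(m+N_4\) vanish mod \(m\). Also \(-3m\equiv0\), so the block changes \(w_0\) by \(N_0\), \(w_1\) by \(N_1\), and \(w_2\) by \(N_2\). The claim \(w_2'=B\) therefore needs \(N_2\equiv0\); in practice the plan is to show that \(p=2\) never occurs inside the block, so \(N_2=0\).

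\emph{Tracking the trajectory.} Every step adds at least \(B=(-3,0,0,1,1)\). Hence within the block \(w_3\) takes the values \(1,2,\dots,m-1\) after the first step and is nonzero until the block ends. The coordinate \(w_4\) runs through \(z, z+1,\dots\), so it hits \(0\) exactly once, at the step \(j\equiv -z\). The coordinate \(w_2=B\) stays nonzero provided no \(p=2\) step occurs.

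\emph{Zero-sets during the block.}
\begin{itemize}[nosep]
\item At step \(0\), \(w_3=0\). Since \(Y\notin\Sigma\), the zero-set contains \(3\) but is not one of the three \(p=2\) sets.
\item At the other steps, only \(w_0,w_1,w_4\) can vanish, so the zero-sets are subsets of \(\{0,1,4\}\).
\end{itemize}
By the cyclic extension of Table~\ref{tab:lambda}, as already used in Lemma~\ref{lem:firstblock}:
\begin{itemize}[nosep]
\item subsets of \(\{0,1\}\) give \(p=0\);
\item sets containing \(4\) but not \(3\) give \(p=1\).
\end{itemize}
For the step-\(0\) zero-sets containing \(3\), I will compute \(p\) from the table on the relevant ones: \(\{3\}\), \(\{0,3\}\) with \(w_4=0\), \(\{3,4\}\), \(\{0,3,4\}\), \(\{1,3\}\), \(\{1,3,4\}\), \(\{0,1,3,4\}\), and so on.

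\emph{The three cases.}
\begin{itemize}
\item \textbf{Generic case} (\(z\ne0,-1\)). The step-\(0\) selector is \(p=0\). Exactly one \(p=1\) step occurs, when \(w_4\) hits zero. The word is \(0^{\,j}\,1\,0^{\,m-j-1}\), so \(N=(m-1,1,0,0,0)\). This gives \((x-1-?,\dots)\); the arithmetic has to be reconciled with the stated \(x-2\) and \(z+1\). Matching that expected form suggests the step-\(0\) selector with \(3\in Z\) is \(p=4\) rather than \(0\), giving \(N_0=m-2\), \(N_4=1\), hence \(z'=z+1\) and \(x'=x-2\). Reading the exact step-\(0\) selector from the table is therefore the first concrete computation.
\item \textbf{Case \(z=-1\).} The trajectory has \(w_4=0\) at step \(1\), so the \(p=1\) step and the step-\(0\) selector interact. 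I expect \(N_1=0\) and an adjusted \(N_0,N_4\), giving \(z'=0\) with \(y\) unchanged.
\item \textbf{Case \(x=0,z=0\).} Here the zero-set at step \(0\) is \(\{0,3,4\}\) plus possibly \(1\); this is the reset to \((-1,0)\).
\end{itemize}

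\emph{Main obstacle.} The hardest part is the bookkeeping of \(p\) on the rotated rows whose zero-set contains \(3\) (and \(4\)), together with making sure the coincidences \(w_0=0\) or \(w_1=0\) occurring mid-block never change the selector. The latter holds because all subsets of \(\{0,1\}\), and all of those subsets with \(4\) added, give the same \(p\). I would tabulate the \(\Lambda_1\) rotations for \(S=Z-1\) first, then do the three word computations.
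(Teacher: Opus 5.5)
Your plan is the same as the paper's: simulate one block of $m$ steps, read off the direction word, and turn the counts $N_i$ into a displacement. As written, though, the proposal never carries out the computation the lemma depends on, and several of your intermediate claims are false.

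\textbf{The step-$0$ selector.} Because $w_2=B\neq0$ and $Y\notin\Sigma$, the zero-set of $Y$ is restricted. When $x\neq0$ it is one of $\{3\},\{1,3\},\{3,4\},\{1,3,4\}$. When $x=0$ it is exactly $\{0,3,4\}$, since then $z=0$ and $y=-B\neq0$. Table~\ref{tab:selector} gives $p=4$ for the first four sets and $p=1$ for the last. You instead state $p=0$ in the generic case, and reach $p=4$ only by matching the desired output. Your list of step-$0$ sets also contains impossible entries: $\{0,3\}$ ``with $w_4=0$'', and the size-four set $\{0,1,3,4\}$.

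\textbf{Consequences for your other claims.} Your opening claim that $N_3=N_4=0$ ``in every case'' is wrong. In fact $N_4=1$ whenever $x\neq0$, and this is exactly what produces $z'=z+1$ or $z'=0$. For the same reason, $w_4$ does not run through $z,z+1,\dots$ and vanish once at $j\equiv-z$.

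\textbf{The missing mechanism.} For $x\neq0$, the $p=4$ step sends $(w_3,w_4)$ from $(0,z)$ to $(1,z+2)$. After that only selectors $0$ and $1$ occur, so at time $k\in\{1,\dots,m-1\}$ one has $w_3=k\neq0$ and $w_4=z+1+k$. This vanishes for exactly one such $k$ if $z\neq-1$, and never if $z=-1$. That is why the case $z=-1$ has word $4,0^{m-1}$ with no $p=1$ step. It is not because $w_4=0$ at step $1$, as you write; in fact $w_4=1$ there.

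\textbf{Incomplete cases.}
\begin{itemize}
\item For $x=z=0$ you compute nothing. The word is $1,0^{m-1}$, because after the $p=1$ step one has $w_3=w_4=k\neq0$.
\item Your case split omits $z=0$, $x\neq0$. This belongs to the generic branch: the $p=1$ step occurs at $k=m-1$, giving $(x'-$,$z')=(x-2,1)$ with $y$ increased by $1$.
\end{itemize}

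With these pieces in place, the counts are $(N_0,N_1,N_4)=(m-2,1,1)$ in the generic case, $(m-1,0,1)$ for $z=-1$, and $(m-1,1,0)$ for $x=z=0$. In all cases $N_2=N_3=0$. These counts give the three branches of $\Theta$, the invariance of $B$ and of the fourth coordinate, and the stated behaviour of $y$.
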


\begin{proof}
This is a direct substitution into Table~\ref{tab:lambda}.  The initial step of a block is governed by
the following three alternatives:
\[
\begin{array}{c|c|c|c}
\text{condition} & \text{possible initial zero-sets} & \text{block word} & (x,z)\text{-effect}\\
\hline
z=-1 & \{3\},\{1,3\} & 4,0^{m-1} & (x-1,0)\\
x=0,\ z=0 & \{0,3,4\} & 1,0^{m-1} & (-1,0)\\
\text{otherwise} & \text{remaining outside-}\Sigma\text{ cases} & 4,0^{r},1,0^{m-r-2} & (x-2,z+1)
\end{array}
\]
Here \(r\) is the unique time at which the fifth coordinate becomes zero in the generic flight.
In the middle line the zero-set is exactly \(\{0,3,4\}\): if \(x=z=0\), the root-flat relation gives
\(y=-B\ne0\).
If \(z=-1\), then \(w_4\ne0\).  Since
\(Y\notin\Sigma\), we also have \(x\ne0\).  With \(B\ne0\), the only possible zero-sets are
\(\{3\}\) and \(\{1,3\}\), according as \(y\ne0\) or \(y=0\).  Table~\ref{tab:lambda} gives \(p=4\) in
both cases.  After that step the fifth coordinate is \(1\), while \(w_3\ne0\).  Before each of the
following \(m-1\) updates the state lies in a row whose selector is \(p=0\), and after these updates
the next block boundary has fifth coordinate \(0\).  Thus the next block has direction
word
\[
4,0^{m-1},
\]
so \(x\) changes by \(-1\), \(z\) changes from \(-1\) to \(0\), and no \(p=1\) step occurs.

If \((x,z)=(0,0)\), then the root-flat relation gives \(y=-B\ne0\), so
\(Z(Y)=\{0,3,4\}\), and the table gives \(p=1\).  The following \(m-1\)
states again stay in the \(p=0\) rows until the block boundary, so the word is
\[
1,0^{m-1},
\]
so \((x,z)\) becomes \((-1,0)\) and the \(y\)-coordinate increases by \(1\).

In all remaining cases, the first step has \(p=4\).  During the following flight the fifth coordinate
advances by one at each step, so exactly once it reaches the zero-set rows with selector \(p=1\); all
other steps have selector \(p=0\).  Hence the word has counts
\[
N_4=1,\qquad N_1=1,\qquad N_0=m-2,
\]
and this gives \((x,z)\mapsto(x-2,z+1)\), with \(y\mapsto y+1\).  These three alternatives are
mutually exclusive and cover all states of the displayed form outside \(\Sigma\).
\end{proof}

\begin{lemma}[Solving the normal-row recurrence]\label{lem:normalrowsolved}
For \(0\le b\le m-2\), Lemma~\ref{lem:firstreturn} holds.
\end{lemma}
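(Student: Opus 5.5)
The plan is to follow the orbit of \(w(a,b)\) only at multiples of \(m\). I first justify that this loses nothing: under \(G\) the coordinate \(w_3\) increases by \(1+\one_{p=3}\) per step. In all the words recorded in Lemmas~\ref{lem:firstblock} and~\ref{lem:blockrecurrence} no \(p=3\) step occurs, so \(w_3\) vanishes only at block boundaries. Since \(\Sigma\subseteq\{w_3=0\}\), the first return to \(\Sigma\) happens at a multiple of \(m\). Every such state has the form \((x,y,B,0,z)\) with \(B=b+1\ne0\), because \(b\le m-2\) and no \(p=2\) step occurs before the return. Membership in \(\Sigma\) means \(x=0\) and \(z\ne0\). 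The return time is therefore \(m\) times the number of blocks, \(b'=b+1\), and \(a'\) is the final \(y\)-coordinate.

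By Lemma~\ref{lem:firstblock}, after one block \((x,z)=(-2,-s)\) and \(y=a+1\). From then on I iterate \(\Theta\) of Lemma~\ref{lem:blockrecurrence}. The generic branch gives \((x,z)=(-2-2j,-s+j)\) after \(j\) further blocks. Since \(1\le s\le 2h\), \(z\) reaches \(-1\) at \(j=s-1\), where \(x=-2s\). For \(0\le j\le s-1<m\) I have \(x=-2(j+1)\ne0\) because \(m\) is odd, so no earlier entry into \(\Sigma\) and no \((0,0)\) state occurs. The \(z=-1\) branch then gives \((-2s-1,0)\) with \(y\) unchanged. Here \(-2s-1\equiv0\) exactly when \(s=h\), since \(m=2h+1\). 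This produces the case split of the statement.

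If \(s\ne h\), I continue with the generic branch from \((-2s-1,0)\), obtaining \((-2s-1-2j,\,j)\). This is in \(\Sigma\) when \(2j\equiv2(h-s)\), i.e. \(j\equiv h-s\pmod m\). I take \(j=h-s\) if \(s<h\) and \(j=3h+1-s\) if \(s>h\); in both cases \(1\le j\le m-2\), so \(z=j\) never equals \(0\) or \(-1\) along the way. The block count is \(1+(s-1)+1+j\), which equals \(h+1\) or \(3h+2\) respectively. If \(s=h\), the state is \((0,0)\), the middle branch sends it to \((-1,0)\), and the generic branch gives \((-1-2j,j)\), which enters \(\Sigma\) at \(j=h\). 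The block count is then \(1+(h-1)+1+1+h=2h+2\). This gives the three return times in Case~I.

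For \(a'\) I count the \(y\)-increments: one in the first block, one in each generic or middle block, and none in the \(z=-1\) block. This gives \(a'=a+1+(s-1)+(h-s)=a+h\), or \(a+1+(s-1)+(3h+1-s)=a+3h+1\equiv a+h\), or \(a+1+(h-1)+1+h=a+2h+1\equiv a\). I expect the main obstacle to be the bookkeeping that rules out premature entry into \(\Sigma\) or into the exceptional branches of \(\Theta\) at every intermediate block. I would handle it with the explicit linear formulas above and the bounds \(j<m\), using the oddness of \(m\) for the congruences \(2j\equiv\ldots\). As a final check, the root-flat relation gives \(z=-(a'+b')\ne0\) at the return point.
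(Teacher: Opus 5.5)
Your proposal is correct and follows essentially the same route as the paper: you start the \(\Theta\)-recurrence at \((-2,-s)\) from Lemma~\ref{lem:firstblock}, split on whether \(-2s-1\equiv 0\) (i.e.\ \(s=h\)), count blocks and \(y\)-increments, and use \(w_3\ne0\) at interior points of each block to rule out returns between block boundaries. One small slip: for \(s=h+1\) the solution is \(j=3h+1-s=2h=m-1\), so the return point has \(z=-1\) and the correct bound is \(1\le j\le m-1\), not \(m-2\); this still suffices, because the values of \(z\) before the return are \(0,1,\ldots,j-1\), which avoid \(-1\), and the final value \(z=j\) is nonzero.
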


\begin{proof}
By Lemma~\ref{lem:firstblock}, after the first \(m\)-block we are at
\[
Y_1=(-2,a+1,b+1,0,-s).
\]
Thus the block recurrence starts from
\[
(x_1,z_1)=(-2,-s).
\]
The return condition is \(x=0\) and \(z\ne0\), because \(B=b+1\ne0\) and the third coordinate in
\(\Sigma\) is unrestricted.

\medskip
\noindent\textbf{Case \(s\in\{1,\ldots,h-1\}\) (short generic).}
First suppose \(1\le s\le h-1\).  Under the generic branch of \(\Theta\), the value \(z=-1\) is reached
after \(s-1\) applications, at
\[
(-2s,-1).
\]
The seam branch sends this to \((-2s-1,0)\).  After a further \(h-s\) generic applications, the
\(x\)-coordinate is \(0\), while \(z=h-s\ne0\).  Hence the first return is at block index \(h+1\), so
\[
\ell=(h+1)m.
\]
The \(y\)-coordinate starts at \(a+1\) and is increased in exactly \(h-1\) of these blocks; hence
\[
y=a+h.
\]
Therefore \(\Phi(a,b)=(a+h,b+1)\).

\medskip
\noindent\textbf{Case \(s=h\) (double seam).}
Now suppose \(s=h\).  The same calculation reaches \((0,0)\).  This is not in \(\Sigma\), and the
seam branch of \(\Theta\) sends it to \((-1,0)\).  A further \(h\) generic blocks are needed to reach
\((0,h)\).  Thus the first return is at block index \(2(h+1)\), and
\[
\ell=2(h+1)m.
\]
The total increase of \(y\) after the first block is \(2h\), so the final \(y\)-coordinate is
\(a+1+2h\equiv a\).  Hence \(\Phi(a,b)=(a,b+1)\).

\medskip
\noindent\textbf{Case \(s\in\{h+1,\ldots,2h\}\) (wrap).}
Finally suppose \(h+1\le s\le2h\).  Again \(z=-1\) is reached after \(s-1\) generic applications,
then the seam branch is taken.  From the resulting state \((-2s-1,0)\), the first solution of
\(-2s-1-2r=0\) in \(\mathbb Z_m\) with \(r\ge0\) is
\[
r=m+h-s.
\]
This value is positive and gives \(z=r\ne0\).  Hence the number of applications of \(\Theta\) after
\(Y_1\) is
\[
s+(m+h-s)=m+h=3h+1,
\]
so the return is at block index \(3h+2\), and
\[
\ell=(3h+2)m.
\]
The \(y\)-coordinate increases by
\[
(s-1)+(m+h-s)=m+h-1=3h
\]
after the first block, so the final \(y\)-coordinate is
\[
a+1+3h\equiv a+h.
\]
Therefore \(\Phi(a,b)=(a+h,b+1)\).

In all three cases, the displayed sequence of \(\Theta\)-states reaches \(x=0\) with \(z\ne0\) for the
first time at the stated block index; before that, either \(x\ne0\), or \(x=0\) occurs only together
with \(z=0\).  Hence no earlier block boundary lies in \(\Sigma\).  Lemma~\ref{lem:blockrecurrence}
shows that within each non-final block \(w_3\ne0\) except at the boundary, so no interior point of such
a block lies in \(\Sigma\).  Thus the returns are first returns.
\end{proof}

\begin{lemma}[Short last-row excursions]\label{lem:shortlastrow}
Let \(b=m-1=-1\) and \(a\ne0,1\).  We use the representative \(a\in\{2,\ldots,m-1\}\), so
\(a-2\ge0\).  Then
\[
\Phi(a,m-1)=(a,0),\qquad \ell(a,m-1)=m-1.
\]
\end{lemma}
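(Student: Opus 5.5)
The plan is to trace the \(G\)-orbit of \(w(a,m-1)=(0,a,-1,0,1-a)\) step by step, reading the selector \(p\) off the cyclic extension of Table~\ref{tab:lambda}. The aim is to show that the direction word of length \(m-1\) is
\[
2,\quad 0^{r},\quad 3,\quad 0^{m-3-r}
\]
for a suitable \(r\), and that no intermediate state lies in \(\Sigma\).

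Before tracing, I will fix the target counts. By \((G_2)\), \(\ell\) steps with counts \((N_0,\dots,N_4)\) displace \(w\) by \((-3\ell+N_0,\ N_1,\ N_2,\ \ell+N_3,\ \ell+N_4)\). Reaching \(w(a,0)=(0,a,0,0,-a)\) after \(\ell=m-1\) steps requires the displacement \((0,0,1,0,-1)\). This forces \(N_2=1\), \(N_1=0\), \(N_3=1\), \(N_4=0\) and \(N_0=m-3\). The checks are \(-3(m-1)+m-3=-2m\equiv0\), then \(m-1+1\equiv0\), then \(m-1\equiv-1\). So the whole argument reduces to showing that the word has exactly this shape.

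\textbf{First step.} At \(w(a,-1)\) we have \(Z\supseteq\{0,3\}\) and \(w_4=1-a\ne0\) because \(a\ne1\), so \(p=2\). The next state is \((-3,a,0,1,2-a)\), with \(w_0=-3\ne0\) since \(m\ge5\).

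\textbf{The \(p=0\) run.} From then on the coordinate \(w_2\) stays \(0\) as long as \(p\ne2\), and \(w_3=1+k\) increases by one per \(p=0\) step. The relevant zero-sets therefore contain \(2\), and possibly \(1\) (when \(a=0\)), \(0\) or \(4\). I will evaluate the cyclic extension of the table on these rows; for instance \(\Lambda_1(\{1\})(0)=\Lambda_1(\{0\})(4)+1=0\), so \(Z=\{2\}\) gives \(p=0\). The expected picture is that \(p=0\) holds in all these rows except one, presumably \(Z\supseteq\{2,4\}\), where \(w_4\) first vanishes and which gives \(p=3\). Since \(w_4=2-a+k\), the vanishing happens after \(r=a-2\) steps, which is where the representative \(a\in\{2,\dots,m-1\}\) with \(a-2\ge0\) enters. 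After the \(p=3\) step, \(w_4\) becomes \(1\), \(w_3\) jumps by \(2\), and the remaining \(p=0\) steps run until \(w_3\) returns to \(0\). That happens exactly at total time \(m-1\), consistent with the counts above.

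\textbf{No earlier return.} Points of \(\Sigma\) require \(w_3=0\). Along the run \(w_3\) takes the values \(1,2,\dots\) with a single jump of \(2\) at the \(p=3\) step, so it first hits \(0\) at step \(m-1\). Hence the return is a first return.

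The main obstacle is the middle stage: correctly evaluating \(p\) on every zero-set row containing \(2\) via the cyclic extension. This includes the incidental zeros of \(w_0=-3-2k\), which vanishes once since \(m\) is odd, and of \(w_1=a\) when \(a=0\). The edge cases \(a=2\), where \(r=0\) and the \(3\) comes immediately, and \(a=0\), where \(1\in Z\) throughout, must be checked to give the same selectors. I would handle this by tabulating \(\Lambda_1(S)(0)\) for all rotations of the seven representatives containing \(S\ni 1\), before tracing the orbit.
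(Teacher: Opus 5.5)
Your proposal is correct and follows the paper's proof: the paper likewise reads the word $2,0^{a-2},3,0^{m-a-1}$ off the zero-set table, gets $w(a,0)$ from the displacement formula, and rules out an earlier return because $w_3$ does not vanish at any intermediate step. The only selector values you need are $p(\{2\})=p(\{0,2\})=0$ and $p(\{2,4\})=p(\{0,2,4\})=3$, as in Table~\ref{tab:selector}; your edge case $a=0$ is already excluded by the hypothesis $a\ne0$, so $1\notin Z$ throughout the excursion.
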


\begin{proof}
Starting from
\[
w(a,-1)=(0,a,-1,0,1-a),
\]
the zero-set table gives the direction word of length \(m-1\)
\[
2,\quad 0^{a-2},\quad 3,\quad 0^{m-a-1}.
\]
Thus the counts are
\[
(N_0,N_1,N_2,N_3,N_4)=(m-3,0,1,1,0),\qquad \ell=m-1.
\]
Substituting into the displacement formula gives
\[
(0,a,0,0,-a)=w(a,0).
\]
The same explicit word has no intermediate point with \(w_0=0,w_3=0,w_4\ne0\), so this is the first
return.
\end{proof}

The only section point not covered by Lemmas~\ref{lem:normalrowsolved} and \ref{lem:shortlastrow} is
\[
x_*=(0,m-1).
\]

\begin{lemma}[Long wrap skeleton]\label{lem:longwrapclosing}
The remaining point satisfies
\[
\Phi(0,m-1)=(1,0),\qquad
\ell(0,m-1)=m^3-(m-1)(m-2).
\]
\end{lemma}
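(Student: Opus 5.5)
The plan has two parts: establish the image \(\Phi(0,m-1)=(1,0)\) by a counting argument, and then derive the return time from the block dynamics.

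\textbf{Step 1 (the image).} Lemmas~\ref{lem:normalrowsolved} and \ref{lem:shortlastrow} show that the first-return map \(\Phi\) is injective on \(\Sigma\setminus\{x_*\}\). Their images are
\[
\{(a',b'):1\le b'\le m-1\}\setminus\{(a',b'):a'+b'=0\}
\qquad\text{and}\qquad
\{(a,0):a\ne0,1\}.
\]
Because \(G\) is a bijection (Lemma~\ref{lem:matching}), its first-return map to the finite set \(\Sigma\) is a well-defined permutation of \(\Sigma\). Every \(G\)-orbit meets \(\Sigma\): if some orbit avoided \(\Sigma\), then \(\Sigma\) would not be all of the finite space; we only need that \(x_*\) returns at all, and this holds because its forward orbit under the bijection \(G\) is periodic. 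The single section point not yet hit is \(w(1,0)\). Hence \(\Phi(x_*)=(1,0)\) by counting alone.

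\textbf{Step 2 (the return time).} I would not trace the long orbit directly. Instead, I would use the fact that the sum of the return times over a cycle of \(\Phi\) equals the length of the corresponding \(G\)-cycle. This alone does not pin down \(\ell(x_*)\), because it presupposes that \(G\) is a single cycle, which is what the paper wants to conclude. So \(\ell(x_*)\) has to be computed from the dynamics.

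\emph{Opening blocks.} Starting from \(w(0,-1)=(0,0,-1,0,1)\), I would compute the first few steps by hand. After the initial \(p=2\) step, the third coordinate becomes \(0\). The orbit then leaves the normal-row regime \(B\ne0\) and enters states of the form \((x,y,0,0,z)\) or similar.

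\emph{Recurrence on the \(B=0\) flat.} I would formulate an analogue of Lemma~\ref{lem:blockrecurrence} for blocks with \(B=0\), with the zero-set table now including the index \(2\). The goal is to show that the orbit sweeps through \(m-1\) normal rows: \(m\)-blocks indexed by \((x,z)\), with \(y\) or \(B\) advancing in a controlled way, giving roughly \(m\cdot m^2\) steps in total. The orbit must avoid exactly the set of points covered by the other excursions, whose total length is \(\sum_{x\ne x_*}\ell(x)\).

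\emph{The count.} The target \(m^3-(m-1)(m-2)\) suggests a structure of the form "\(m\) blocks of length \(m^2\), minus \(m-2\) shortcuts of length \(m-1\)." These shortcuts correspond to the \(m-2\) short excursions of Lemma~\ref{lem:shortlastrow}, each of length \(m-1\) instead of the generic \(m\). Concretely, I would:
\begin{itemize}[nosep]
\item split the long orbit into phases, one for each value of the third coordinate \(B\) (or of \(w_1\));
\item in each phase, apply \(\Theta\)-like recurrences, whose solution is a linear orbit of length \(m\) or \(m-1\) blocks, obtained as in Lemma~\ref{lem:normalrowsolved} using \(\gcd(2,m)=1\);
\item sum the phase lengths.
\end{itemize}

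\emph{First-return check.} Finally, I would verify that no intermediate point satisfies \(w_0=w_3=0\), \(w_4\ne0\). In the \(B=0\) regime this condition is still the membership test for \(\Sigma\), so the check mirrors the end of the proof of Lemma~\ref{lem:normalrowsolved}.

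\textbf{Main obstacle.} I expect the hard step to be the block recurrence on the degenerate flat where the third coordinate is zero. There the zero-sets \(\{2\},\{0,2\},\{1,2\},\{2,4\},\dots\) bring in new table rows, obtained from the cyclic extension of Table~\ref{tab:lambda}. These rows can produce selectors \(p=3\), which move \(w_3\) off zero and change the block structure. I would first tabulate \(p(Z)\) for all 27 zero-sets containing \(2\), and then look for a closed two-level recurrence (an odometer in \((x,z)\) nested in a counter for \(w_1\)) whose total step count matches \(m^3-(m-1)(m-2)\).
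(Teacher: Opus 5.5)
Your Step~1 is correct, and it takes a shorter route than the paper to the first half of the statement. Because \(G\) is a bijection of the finite set \(A_m\) (Lemma~\ref{lem:matching}), \(x_*\) does return to \(\Sigma\). The first-return map of a bijection is injective. So the \((m-1)^2\) images from Lemma~\ref{lem:normalrowsolved} fill rows \(1,\dots,m-1\) of \(\Sigma\), Lemma~\ref{lem:shortlastrow} fills \(\{(a,0):a\ne0,1\}\), and only \((1,0)\) is left. The paper instead reads \((1,0)\) off as the endpoint of the explicitly traced orbit. Your aside that every \(G\)-orbit meets \(\Sigma\) is not justified at this point (it is essentially what the total-length count is for), but, as you note, you do not need it.

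The gap is Step~2, which is the substantive half, and you never compute it. The length \(\ell(0,m-1)\) is what makes the last-row sum in Lemma~\ref{lem:total} equal \(m^3\).
\begin{itemize}
\item Your proposed structure ``\(m\) blocks of length \(m^2\) minus \(m-2\) shortcuts of length \(m-1\)'' is just the identity \(\ell(x_*)+(m-2)(m-1)=m^3\) rewritten. It does not describe the orbit.
\item Phases indexed by the third coordinate cannot work. Only \(p=2\) moves \(w_2\), so \(w_2=0\) for the whole excursion.
\end{itemize}

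The missing idea is the secondary section \(E(u,v)=(u,v,0,0,-u-v)\) with \(u\ne0\), \(u+v\ne0\).
\begin{itemize}
\item The opening word \(2,4^{m-1},0,4^{m-1}\) passes through the origin and reaches \(E(1,0)\) after \(2m\) steps.
\item At every \(E(u,v)\) the selector is \(p=1\). Afterwards \(w_1=v+1\) is frozen until \(w_3\) next vanishes.
\item During that run the selector is \(4\) throughout if \(v+1=0\). Otherwise it is \(3\) exactly when \(w_4=0\), and \(0\) else.
\end{itemize}

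This yields three explicit moves:
\begin{itemize}
\item \(E(u,-1)\mapsto E(u,0)\) in \(m\) steps;
\item \(E(u,v)\mapsto E(u,v+1)\) in \(m-1\) steps when \(v+1\notin\{0,-u\}\);
\item \(E(u,-u-1)\mapsto E(u+1,-u)\) in \(3m-2\) steps. Its single intermediate point with \(w_3=0\) has \(w_4=0\). The last such move, from \(E(m-1,0)\), lands at \(w(1,0)\).
\end{itemize}

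So the excursion is an odometer with inner digit \(v=w_1\) and outer digit \(u=w_0\). The generic move has length \(m-1\), so the \(E\)-visits drift against multiples of \(m\). That is why a two-coordinate \(m\)-block recurrence like \(\Theta\) is not the right frame.

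The three moves occur \(m-2\), \((m-2)^2\) and \(m-1\) times respectively. This gives \(2m+(m-2)m+(m-2)^2(m-1)+(m-1)(3m-2)=m^3-(m-1)(m-2)\). The first-return check reduces to observing that every point of the excursion with \(w_3=0\) is either an \(E\)-point (so \(w_0=u\ne0\)) or has \(w_4=0\). Without this explicit trace, or an equivalent one, the length claim remains unproved.
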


\begin{proof}
The orbit from \(x_*=w(0,m-1)\) first passes through the all-zero point and then enters the family
\[
E(u,v)=(u,v,0,0,-u-v),
\]
with \(u\ne0\) and \(u+v\ne0\).  Inside this family three transitions appear: a quick seam step when
\(v+1=0\), a length-\((m-1)\) generic step when neither seam is hit, and a long seam step of length
\(3m-2\) when \(v+1=-u\).  The formulas \textup{(\(LW_1\))}--\textup{(\(LW_3\))} below give these three moves
explicitly.

At every such point the table gives \(p=1\), since the zero-set
contains \(\{2,3\}\), does not contain \(4\), and does not contain \(0\).  The long wrap starts from
\[
x_*=w(0,m-1)=(0,0,-1,0,1).
\]
The initial direction word is
\[
2,\quad 4^{m-1},\quad 0,\quad 4^{m-1}.
\]
Indeed, the first step is the section step \(p=2\).  Then \(w_1=w_2=0\) and \(w_3\ne0\), so the table
gives \(p=4\) until the all-zero point is reached after \(m-1\) steps.  At the all-zero point the table
gives \(p=0\), and then the same \(p=4\) run of length \(m-1\) ends at
\[
G^{2m}x_*=E(1,0).
\]
This initial segment has no intermediate point of \(\Sigma\): whenever \(w_3=0\) before the endpoint,
one also has \(w_4=0\).

It remains to follow the orbit from the points \(E(u,v)\).  Put \(V=v+1\).  The first step from
\(E(u,v)\) has selector \(p=1\) and gives
\[
(u-3,V,0,1,1-u-v)=(u-3,V,0,1,2-u-V).
\]
If \(V=0\), then \(w_1=0\) and \(w_3\ne0\) until the next boundary, so the next \(m-1\) selectors are
all \(4\).  Therefore
\[
G^mE(u,m-1)=E(u,0). \tag{$LW_1$}
\]

Assume now that \(V\ne0\).  Until \(w_3\) next becomes zero, the coordinate \(w_1=V\) is fixed.  Hence
the only possible nonzero selector during this flight is \(p=3\), and it occurs exactly when
\(w_4=0\); all other steps have selector \(p=0\).  Since both \(p=0\) and \(p=3\) increase \(w_4\) by
one, after the \(p=1\) step the successive values of \(w_4\) are
\[
2-u-V+j\qquad (j=0,1,2,\ldots).
\]
Thus the \(p=3\) events occur at the values \(j\equiv u+V-2\pmod m\).

If \(V\ne -u\), then the first such \(j\) lies in \(\{0,
\ldots,m-3\}\); the excluded value \(j=m-1\) would mean \(u+v=0\), which is not an \(E\)-state, and
\(j=m-2\) is exactly the case \(V=-u\).  Hence there is exactly one \(p=3\) event among the next
\(m-2\) steps.  At that time
\[
w_3=1+(m-2)+1=0,
\qquad
w_4=(2-u-V)+(m-2)=-u-V\ne0,
\]
and the first coordinate is
\[
u-3-2(m-2)-1=u.
\]
Before this endpoint, after \(k<m-2\) steps of the flight, the coordinate \(w_3\) is represented by
\(1+k+\varepsilon\) with \(\varepsilon\in\{0,1\}\); hence it lies in \(\{1,\ldots,m-1\}\) and is not zero.
Therefore
\[
G^{m-1}E(u,v)=E(u,v+1)\qquad (v+1\ne0,\; v+1\ne -u). \tag{$LW_2$}
\]

Finally suppose \(V=-u\), i.e. \(v=-u-1\).  Then \(w_4\) starts at \(2\), so the first two \(p=3\)
events occur at \(j=m-2\) and \(j=2m-2\).  The only intermediate vanishing of \(w_3\) occurs at
\(j=2m-2\), where \(w_4=0\); hence that point is neither in \(\Sigma\) nor in the \(E\)-family, and the
selector there is again \(p=3\).  After \(3m-3\) steps following the initial \(p=1\) step, there have
been exactly two \(p=3\) events, and
\[
w_3=1+(3m-3)+2=0,
\qquad
w_4=2+(3m-3)=-1,
\]
while
\[
w_0=u-3-2(3m-3)-2=u+1.
\]
Thus
\[
G^{3m-2}E(u,-u-1)=
\begin{cases}
E(u+1,-u),&u\ne m-1,\\
w(1,0),&u=m-1.
\end{cases} \tag{$LW_3$}
\]
In each of \textup{(\(LW_1\))}--\textup{(\(LW_3\))}, the displayed endpoint is the first time in the
segment with \(w_3=0\) and \(w_4\ne0\); in the long case the earlier time with \(w_3=0\) has
\(w_4=0\).

Starting from \(E(1,0)\), the rules \textup{(\(LW_1\))}--\textup{(\(LW_3\))} visit, for each fixed
\(u=1,2,\ldots,m-1\), the \(m-1\) values
\[
v=1-u,\; 2-u,\; \ldots,\; -u-1
\]
cyclically, omitting only \(v=-u\).  The last seam, from \(E(m-1,0)\), lands at \(w(1,0)\).  Hence
\(\Phi(0,m-1)=(1,0)\).

The length is the sum of the displayed segments.  The initial segment has length \(2m\).  Among the
\((m-1)^2\) points \(E(u,v)\), the case \(v+1=0\) occurs \(m-2\) times, the seam case \(v+1=-u\)
occurs \(m-1\) times, and the remaining cases occur \((m-2)^2\) times.  Therefore
\[
\ell(0,m-1)=2m+(m-2)m+(m-1)(3m-2)+(m-2)^2(m-1).
\]
A simplification gives
\[
\ell(0,m-1)=m^3-m^2+3m-2=m^3-(m-1)(m-2).
\]
\end{proof}

\begin{proof}[Proof of Lemma~\ref{lem:firstreturn}]
The normal rows are Lemma~\ref{lem:normalrowsolved}; the short last-row cases are
Lemma~\ref{lem:shortlastrow}; the remaining long wrap is Lemma~\ref{lem:longwrapclosing}.  Together
these three statements are exactly the first-return table.
\end{proof}

\section{The induced cycle and total excursion length}\label{sec:induced-cycle}

\begin{lemma}[The induced map is one cycle]\label{lem:PhiCycle}
Assume \(m\ge5\).  The first-return map \(\Phi\) is a single cycle of length \(m(m-1)\) on \(\Sigma\).
\end{lemma}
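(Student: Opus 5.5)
The plan is to reduce $\Phi$ to a one-dimensional map on a single row of $\Sigma$, using only the first-return table of Lemma~\ref{lem:firstreturn}. By that table, the coordinate $b$ increases by exactly one at every application of $\Phi$: $b\mapsto b+1$ for $0\le b\le m-2$, and $b=m-1\mapsto 0$ in Case~II. Hence every $\Phi$-orbit meets the row $b=0$, and it does so once every $m$ applications. The row $b=0$ inside $\Sigma$ is $\{(a,0):a\ne0\}$, which has $m-1$ points. Define $\psi(a)$ by $\Phi^m(a,0)=(\psi(a),0)$. Then $\Phi$ is a single cycle of length $m(m-1)$ if and only if $\psi$ is a single $(m-1)$-cycle on $\mathbb Z_m\setminus\{0\}$, and this is what I will prove.

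To compute $\psi$, I track $s=a+b$ along the $m-1$ normal steps $b=0\to1\to\cdots\to m-1$. In the generic case $s\mapsto s+h+1$, and in the exceptional case $s=h$ we get $s\mapsto h+1$. Since $m=2h+1$, we have $h+1\equiv 2^{-1}$ and $h\equiv-2^{-1}$ in $\mathbb Z_m$. It is therefore cleaner to use $t=2s$, which moves $t\mapsto t+1$, except that $t=-1$ is sent to $1$. Thus $t$ runs through the $(m-1)$-cycle $1\to2\to\cdots\to m-1\to1$ on the nonzero residues, and the exception is exactly what skips the forbidden value $s=0$. Along these $m-1$ normal steps the coordinate $a$ gains $h$ at each generic step and nothing at an exceptional one. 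Let $k\in\{0,1\}$ be the number of exceptional hits. The total gain is $(m-1-k)h$, which is $h+1\equiv-h$ if $k=0$ and $1$ if $k=1$.

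The starting value is $t_0=2a$, and the $t$-values at rows $0,\dots,m-2$ are $m-1$ consecutive elements of that $(m-1)$-cycle, ending at row $m-2$. So $k=0$ exactly when $-1$ is the one omitted value, namely the value at row $m-1$. That happens iff $t_0=1$, i.e.\ $a=h+1$. This gives the state $(a^\ast,m-1)$ reached at the last row: $a^\ast=a+1$ when $a\ne h+1$, and $a^\ast=2h+2=1$ when $a=h+1$. The last step is Case~II, which sends $(a^\ast,m-1)\mapsto(a^\ast,0)$ for $a^\ast\ne0,1$ and $(0,m-1)\mapsto(1,0)$; the value $a^\ast=1$ cannot occur on row $m-1$ since $a+b\ne0$. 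Combining the two steps should give $\psi(a)=a+1$ for $a\ne h+1,\,m-1$, together with corrections at $a=h+1$ and at $a=m-1$ (where $a^\ast=0$ and Case~II redirects to $1$). The final step is to check that these corrections splice the translation $a\mapsto a+1$ on $\mathbb Z_m\setminus\{0\}$ into one $(m-1)$-cycle, i.e.\ that the affected values are rerouted so that no short cycle splits off.

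I expect the main obstacle to be the boundary bookkeeping. This means verifying the parity and range claims for $k$ with residues read as representatives in $\{1,\dots,2h\}$ as in Lemma~\ref{lem:firstreturn}, and checking that the two exceptional values $a=h+1$ and the one reaching $a^\ast=0$ interact correctly rather than creating a fixed point or a 2-cycle. If the closed formula proves awkward, I would fall back on listing $\psi$ explicitly as a permutation of $\{1,\dots,m-1\}$, checking that it is a single cycle symbolically in $h$, and sanity-checking it against $m=5$ by direct iteration of the table.
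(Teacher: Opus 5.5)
Your reduction to the row map $\psi$ on $b=0$ is sound and matches the paper's strategy. So is the substitution $t=2s$, which turns the normal-row rule into the successor map $1\to2\to\cdots\to m-1\to1$ on $\mathbb Z_m\setminus\{0\}$. The counting step that follows is wrong, however. The normal moves occur at the $m-1$ rows $b=0,1,\dots,m-2$, and $m-1$ consecutive elements of an $(m-1)$-cycle omit nothing: they exhaust all nonzero residues. So $t=-1$ (that is, $s=h$) is hit exactly once for every $a$, and $k=1$ always.

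Your exceptional case is an off-by-one. For $a=h+1$ you have $t_0=1$, hence $t_b=1+b$ and $t_{m-2}=m-1\equiv-1$. The exceptional move therefore happens at the last normal row $b=m-2$, not at row $m-1$, and $a^\ast=a+1=h+2$, not $1$. Your own remark that $(1,m-1)\notin\Sigma$ should have flagged this, since the formula $a^\ast=1$ describes a point outside the section. As written, your ``corrections'' make $\psi$ undefined at $h+1$; if you force Case~II there, both $h+1$ and $m-1$ go to $1$ and $h+2$ has no preimage. So the final splice check you defer cannot succeed, and the argument does not establish the lemma until this step is repaired.

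The repair is immediate, and your $t$-variable makes it cleaner than counting hits. Since the $t$-dynamics is an $(m-1)$-cycle, $t_{m-1}=t_0$, i.e.\ $2\bigl(a_{m-1}+(m-1)\bigr)=2a$. Because $2$ is invertible modulo odd $m$, this gives $a^\ast=a_{m-1}=a+1$ for every $a\ne0$; equivalently, the total gain is $(m-2)h\equiv1$. Case~II then gives $\psi(a)=a+1$ for $a\ne m-1$, and $\psi(m-1)=1$ via $(0,m-1)\mapsto(1,0)$. This is visibly the single cycle $1\to2\to\cdots\to m-1\to1$. It is exactly the paper's conclusion $\Phi^m(a,0)=(a+1,0)$, with a single splice point at $a=m-1$ rather than two.
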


\begin{proof}
Every row of Lemma~\ref{lem:firstreturn} sends \(b\) to \(b+1\), with \(b=m-1\) returning to \(0\).
Thus \(m\) applications are needed to return to the same \(b\)-row.  It remains to compute the induced
map after one full turn through the rows.

Let \((a_b,b)\) be the point after \(b\) normal-row moves, starting from \((a_0,0)=(a,0)\), and put
\(s_b=a_b+b\).  For \(0\le b\le m-2\), the normal-row rule sends
\[
s_b\longmapsto
\begin{cases}
 s_b+h+1,&s_b\ne h,\\
 h+1,&s_b=h,
\end{cases}
\]
with values read in \(\mathbb Z_m\) and with \(0\) omitted from the section.  This is the map
``add \(h+1\) modulo \(m\), and if the result would be \(0\), replace it by \(h+1\)''.  Since
\(\gcd(h+1,2h+1)=1\), addition by \(h+1\) is one cycle on \(\mathbb Z_m\).  Removing the
zero residue and splicing its predecessor directly to its successor gives the displayed map on
\(\mathbb Z_m\setminus\{0\}\), so its orbit on the nonzero residues is one cycle.  Hence during the
\(m-1\) normal-row moves, the value \(s_b=h\) occurs exactly once.  Therefore \(a_b\) is increased by
\(h\) exactly \(m-2\) times and is unchanged once, so before the last-row move
\[
a_{m-1}=a+h(m-2)\equiv a+1\pmod m.
\]
The last-row rule fixes this value if it is nonzero and sends \(0\) to \(1\).  Thus, for every
\(a\in\{1,\ldots,m-1\}\),
\[
\Phi^m(a,0)=(a+1,0),
\]
where \(m-1+1\) is read as \(1\).  Hence \(\Phi^m\) is one cycle of length \(m-1\) on
\(\Sigma_0=\{(a,0):a\ne0\}\), and \(\Phi\) itself is one cycle of length \(m(m-1)\) on \(\Sigma\).
\end{proof}

\begin{lemma}[Total excursion length]\label{lem:total}
Assume \(m\ge5\).  Then
\[
\sum_{(a,b)\in\Sigma}\ell(a,b)=m^4.
\]
\end{lemma}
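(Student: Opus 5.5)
The plan is to sum the return times row by row, using the first-return table of Lemma~\ref{lem:firstreturn}. I organize $\Sigma$ by the value of $b\in\mathbb Z_m$ and show that every row $\{(a,b)\in\Sigma\}$ contributes exactly $m^3$. Since there are $m$ rows, this gives the total $m^4$.

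\emph{Normal rows.} Fix $b$ with $0\le b\le m-2$. As $a$ runs over $\mathbb Z_m$ with $a+b\neq0$, the sum $s=a+b$ runs bijectively over $\{1,\ldots,2h\}$. Each value of $s$ therefore occurs exactly once in the row. By Case~I of Lemma~\ref{lem:firstreturn}, the row sum is
\[
m\bigl[(h-1)(h+1)+2(h+1)+h(3h+2)\bigr]=m\,(4h^2+4h+1)=m\cdot m^2=m^3 .
\]
Here we use $m=2h+1$. The $m-1$ normal rows thus contribute $(m-1)m^3$ in total.

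\emph{Last row.} For $b=m-1$ the section points are $a\neq1$, which gives $m-1$ points. The point $a=0$ contributes $m^3-(m-1)(m-2)$. Each of the remaining $m-2$ points $a\notin\{0,1\}$ contributes $m-1$, by Case~II of Lemma~\ref{lem:firstreturn}. The row sum is therefore
\[
m^3-(m-1)(m-2)+(m-2)(m-1)=m^3 .
\]
Adding the two parts gives $(m-1)m^3+m^3=m^4$.

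There is no real obstacle here beyond bookkeeping. The only points needing care are the following:
\begin{itemize}
\item for fixed $b$, the correspondence $a\leftrightarrow s$ is a bijection onto $\{1,\ldots,2h\}$, so the three ranges $1\le s\le h-1$, $s=h$ and $h+1\le s\le 2h$ have sizes $h-1$, $1$ and $h$;
\item in the last row, the excluded point is $a=1$ and not $a=0$.
\end{itemize}
Combined with Lemma~\ref{lem:PhiCycle}, this total shows that the single $\Phi$-cycle lifts to a $G$-orbit of length $m^4=|A_m|$. By Lemma~\ref{lem:return} and (CG), this yields Hamiltonicity for $m\ge5$.
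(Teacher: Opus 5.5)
Your proposal is correct and follows the paper's proof essentially verbatim: you sum row by row, using the bijection \(a\mapsto s=a+b\) onto \(\{1,\ldots,2h\}\) to get \(m(4h^2+4h+1)=m^3\) for each of the \(m-1\) normal rows, and you note that in the last row the long wrap at \(a=0\) and the \(m-2\) short excursions also total \(m^3\). Your closing remark about Hamiltonicity goes beyond this lemma; it would additionally need the bijectivity of \(G\) and the splice lemma, Lemma~\ref{lem:splice}.
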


\begin{proof}
Fix a normal row \(0\le b\le m-2\).  As \((a,b)\) ranges in \(\Sigma\), the value \(s=a+b\) runs
through \(1,2,\ldots,2h\) once.  Thus the row sum is
\[
(h-1)(h+1)m+2(h+1)m+h(3h+2)m.
\]
This equals
\[
\bigl((h+1)^2+h(3h+2)\bigr)m=(4h^2+4h+1)m=(2h+1)^2m=m^3.
\]
There are \(m-1\) normal rows.  In the last row, the sum is
\[
m^3-(m-1)(m-2)+(m-2)(m-1)=m^3.
\]
Therefore the total is \((m-1)m^3+m^3=m^4\).
\end{proof}

\begin{lemma}[Induced-map splice lemma]\label{lem:splice}
Let \(F\) be a bijection of a finite set \(X\), and let \(\Sigma\subseteq X\).  Suppose every point of
\(\Sigma\) has a positive first return to \(\Sigma\).  If the first-return map on \(\Sigma\) is one cycle
and the sum of its first-return times is \(|X|\), then \(F\) is one cycle on \(X\).
\end{lemma}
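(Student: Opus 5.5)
Write \(\Sigma=\{\sigma_0,\sigma_1,\ldots,\sigma_{k-1}\}\) in the cyclic order of the first-return map \(\Phi\), so \(\Phi(\sigma_j)=\sigma_{j+1}\) with indices mod \(k\), and let \(\ell_j\ge1\) be the first-return time of \(\sigma_j\). I would concatenate the excursions. Since \(F^{\ell_j}\sigma_j=\sigma_{j+1}\), the \(F\)-orbit of \(\sigma_0\) runs through the segments
\[
\sigma_j,\;F\sigma_j,\;\ldots,\;F^{\ell_j-1}\sigma_j\qquad(j=0,\ldots,k-1),
\]
in order. After \(L=\sum_j\ell_j=|X|\) steps it returns to \(\sigma_0\).

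The key step is to show that the \(L\) points \(F^n\sigma_0\), \(0\le n<L\), are pairwise distinct. I would first check distinctness within one segment. Suppose \(F^{r}\sigma_j=F^{r'}\sigma_j\) with \(0\le r<r'<\ell_j\). Because \(F\) is a bijection, we can cancel powers of \(F\) and get \(F^{r'-r}\sigma_j=\sigma_j\in\Sigma\) with \(0<r'-r<\ell_j\). This contradicts the minimality of \(\ell_j\).

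Next I would check distinctness across segments. Suppose \(F^r\sigma_i=F^{r'}\sigma_j\) with \(i\ne j\), \(0\le r<\ell_i\), \(0\le r'<\ell_j\), and say \(r\le r'\). Cancelling gives \(\sigma_i=F^{r'-r}\sigma_j\). If \(r'=r\), then \(\sigma_i=\sigma_j\), which is impossible because the \(\sigma\)'s are distinct. Otherwise \(0<r'-r<\ell_j\) is a return time of \(\sigma_j\) to \(\Sigma\) shorter than \(\ell_j\), again contradicting minimality.

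Hence the orbit of \(\sigma_0\) contains at least \(L=|X|\) distinct points, so it is all of \(X\). Its exact period is \(|X|\), since the points \(F^n\sigma_0\) for \(0\le n<|X|\) are distinct. Therefore \(F\) is a single \(|X|\)-cycle.

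The only subtlety is that cancelling powers of \(F\) uses bijectivity, and I expect no real obstacle here. The hypotheses that every point of \(\Sigma\) has a positive first return and that \(\Phi\) is one cycle ensure the segments are well defined and chain into one closed loop.
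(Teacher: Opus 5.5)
Your proposal is correct and follows essentially the same approach as the paper: concatenate the first-return excursions in the cyclic order of \(\Phi\), use bijectivity of \(F\) to rule out repeats within and across excursions, and count \(\sum_j \ell_j=|X|\) distinct points on the orbit of \(\sigma_0\). Your cancellation argument, which turns any repeat into a return to \(\Sigma\) strictly before the first-return time, spells out the ``different excursions cannot overlap'' step that the paper only sketches.
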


\begin{proof}
Start at any point of \(\Sigma\) and concatenate the first-return excursions in the cyclic order of the
induced map.  Since \(F\) is bijective, no excursion can internally repeat before its first return, and
different excursions cannot overlap without forcing a repeat in the induced cycle.  The concatenated
orbit has length equal to the sum of all first-return times, namely \(|X|\).  Hence this one orbit
contains every point of \(X\).  In particular, there is no additional cycle disjoint from
\(\Sigma\), and \(F\) is a single cycle.
\end{proof}

\begin{lemma}[Cycle lemma for \(m\ge5\)]\label{lem:cycle}
For every odd \(m\ge5\), the normalized return map \(G\) is a single \(m^4\)-cycle on \(A_m\).
\end{lemma}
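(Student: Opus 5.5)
The plan is to apply the induced-map splice lemma (Lemma~\ref{lem:splice}) with \(F=G\), \(X=A_m\) and \(\Sigma=\{w\in A_m:p(Z(w))=2\}\) the section of Section~\ref{sec:first-return}. The steps are as follows.

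\emph{Bijectivity of \(G\).} By the boxed formula \textup{(\(G\))}, \(G=T_{-3q_0+q_3}\circ P\) is a translation composed with the layer map \(P\). By Lemma~\ref{lem:matching}, \(P\) is a bijection of \(A_m\), so \(G\) is a bijection of the finite set \(A_m\).

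\emph{Finite first returns.} Every point of \(\Sigma\) must have a positive first return to \(\Sigma\). This is contained in Lemma~\ref{lem:firstreturn}: for every \((a,b)\) with \(a+b\neq0\), that lemma exhibits a finite positive \(\ell(a,b)\) and a return point \(\Phi(a,b)\in\Sigma\). Its proof, via Lemmas~\ref{lem:normalrowsolved}, \ref{lem:shortlastrow} and \ref{lem:longwrapclosing}, also checks that these are \emph{first} returns, so \(\Phi\) is the induced first-return map. (Since \(G\) is a bijection of a finite set, a first return exists abstractly anyway.)

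\emph{Hypotheses of the splice lemma.} Lemma~\ref{lem:PhiCycle} shows that \(\Phi\) is a single cycle of length \(m(m-1)=|\Sigma|\). Lemma~\ref{lem:total} shows that \(\sum_{\Sigma}\ell=m^4=|A_m|\). Lemma~\ref{lem:splice} then gives that \(G\) is one cycle on \(A_m\), of length \(m^4\).

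The only point needing care is that Lemma~\ref{lem:splice} really applies: the excursions must be pairwise disjoint and together cover \(A_m\). I would argue this directly. Concatenating the excursions along the \(\Phi\)-cycle gives a single \(G\)-orbit segment, starting at some \(w_0\in\Sigma\), of length \(\sum\ell=m^4\), and it returns to \(w_0\). The first return of this orbit to \(w_0\) occurs exactly at time \(m^4\). An earlier return to \(w_0\) would land in \(\Sigma\) at a time that is not a partial sum of the \(\ell\)'s, contradicting the first-return property, or at a partial sum, contradicting that \(\Phi\) is one cycle. Hence the orbit of \(w_0\) has length \(m^4=|A_m|\), so it is all of \(A_m\).

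Everything else is bookkeeping on earlier lemmas, so I expect no real obstacle beyond confirming that Lemmas~\ref{lem:PhiCycle} and \ref{lem:total} are stated for the same \(m\ge5\) as the present lemma.
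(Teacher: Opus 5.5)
Your proposal is correct and follows the paper's own proof step for step: bijectivity of \(G=T_{-3q_0+q_3}P\) from Lemma~\ref{lem:matching}, first returns from Lemma~\ref{lem:firstreturn}, one induced cycle from Lemma~\ref{lem:PhiCycle}, total excursion length \(m^4\) from Lemma~\ref{lem:total}, and then Lemma~\ref{lem:splice}. Your partial-sum argument shows directly that the orbit of \(w_0\) has exact period \(m^4\): an earlier return lands in \(\Sigma\) either between partial sums or at one, and both are ruled out. This argument is sound and is a somewhat more careful version of the paper's proof of the splice lemma.
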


\begin{proof}
By Lemma~\ref{lem:matching}, \(P\) is bijective, hence \(G=T_{-3q_0+q_3}P\) is bijective.  Lemma~\ref{lem:firstreturn} gives positive first returns from every point of \(\Sigma\), with induced
map \(\Phi\).  Lemma~\ref{lem:PhiCycle} says that \(\Phi\) is one cycle, and Lemma~\ref{lem:total}
says that the sum of all induced excursion lengths is \(|A_m|=m^4\).  Lemma~\ref{lem:splice}
therefore applies with \(F=G\) and \(X=A_m\).
\end{proof}

\begin{lemma}[The \(m=3\) finite return certificate]\label{lem:m3certificate}
For the schedule \((\mathrm{Sch}_3)\), every color return \(R_c\) is a single cycle on \(A_3\).
\end{lemma}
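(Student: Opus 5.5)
We reduce the statement to a single finite orbit computation and then certify that orbit. First, by Lemma~\ref{lem:matching} all layer maps of \((\mathrm{Sch}_3)\) are bijections, so each \(R_c\) is a permutation of \(A_3\), which has \(|A_3|=81\) points. Section~\ref{sec:normalizing} shows that \(R_c=T_{c+3}P_cT_{c+4}\) is conjugate, via \(T_{c+4}\), to the normalized map \(G_c\); this uses \(-3q_c=0\) in \(A_3\). By \((\mathrm{CG})\), \(G_c\) is conjugate to \(G=G_0\) through the coordinate rotation \(\rho_c\). Conjugation preserves cycle type, so it suffices to show that
\[
G(w)=w+(-3,0,0,1,1)+e_{p(Z(w))}=w+(0,0,0,1,1)+e_{p(Z(w))}\qquad(m=3)
\]
is one \(81\)-cycle on \(A_3\).

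To organize the check, we first tabulate the selector \(p(Z)\) on every zero-set that is feasible in \(A_3\). These are obtained from Table~\ref{tab:lambda} and its cyclic extension, read off at color \(0\) after shifting \(Z\mapsto Z-1\); they form a subset of the 27 sets already listed in Appendix~\ref{app:matching-certificate}. Next we parametrize \(A_3\) by \((w_0,w_1,w_2,w_3)\in\mathbb Z_3^4\), with \(w_4=-(w_0+w_1+w_2+w_3)\). Starting from \(w=0\), where \(Z=\Zfive\) and hence \(p=0\), we iterate \(G\) and record the sequence of visited points. The certificate is this printed list \(w^{(0)}=0,\ w^{(1)},\dots,w^{(80)}\). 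Its verification consists of three checks: the list contains \(81\) distinct elements of \(A_3\); each consecutive pair satisfies \(w^{(k+1)}=G(w^{(k)})\); and \(G(w^{(80)})=w^{(0)}\). Each step is a local computation: determine the zero-set, look up \(p\), and add a vector. Because \(G\) is a bijection, an orbit of length \(81\) must be the whole of \(A_3\), so \(G\) is a single cycle.

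The reduction is routine. One could try to shorten the check with a section argument like the one in Lemma~\ref{lem:PhiCycle}, but the first-return analysis there assumed \(h\ge2\), and its case splits collapse at \(m=3\). So I would not attempt a structural proof. The main obstacle is making the printed certificate checkable by a reader: \(81\) steps, each involving a zero-set lookup. I would mitigate this by printing, next to each point, its zero-set and its selector \(p\). I would also group the orbit into runs of equal selector, mirroring the direction words used in Section~\ref{sec:first-return}, so that a reader can verify block displacements instead of individual steps. As an independent consistency check, the selector counts along the cycle should satisfy the displacement identity: summing \(G(w)-w\) over all \(81\) points must give \(0\) in \(A_3\).
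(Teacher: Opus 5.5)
Your proposal is correct and is essentially the paper's proof. You conjugate \(R_c\) to \(G_c\) by \(T_{c+4}\) (using \(-3q_c=0\) in \(A_3\)), then to \(G_0\) by \(\rho_c\) via \textup{(\(\mathrm{CG}\))}, and certify \(G_0\) by the explicit \(81\)-point orbit of \(0\); that orbit is exactly the list printed in Appendix~\ref{app:m3certificate}, and the iteration from \(0\) does close up after \(81\) distinct steps. The appeal to bijectivity of \(G\) is harmless but not needed: \(81\) distinct points with \(G(\alpha_r)=\alpha_{r+1}\) cyclically already exhaust \(A_3\) and determine \(G\) as a single cycle.
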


\begin{proof}
It is enough to certify the normalized color-0 return \(G\).  Appendix~\ref{app:m3certificate} prints
an explicit sequence
\[
\alpha_0,\alpha_1,\ldots,\alpha_{80}\in A_3
\]
which exhausts \(A_3\) and satisfies \(G(\alpha_r)=\alpha_{r+1}\), with indices read modulo \(81\).
Therefore \(G=G_0\) is one \(81\)-cycle.  For \(m=3\), Section~\ref{sec:normalizing} shows that each
\(G_c\) is conjugate to \(R_c\), and equation \textup{(\(\mathrm{CG}\))} gives
\(G_c\rho_c=\rho_cG_0\).  Hence every \(G_c\), and therefore every \(R_c\), is a single \(81\)-cycle.
\end{proof}

\section{Putting it together}\label{sec:putting}

\begin{theorem}\label{thm:main}
For every odd \(m\ge3\), the directed torus
\[
D_5(m)=\operatorname{Cay}((\mathbb Z_m)^5,\{e_0,e_1,e_2,e_3,e_4\})
\]
has a Hamilton decomposition.
\end{theorem}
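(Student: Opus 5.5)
The plan is to assemble the pieces already established.

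First, for each odd \(m\ge3\), fix the schedule: \((\mathrm{Sch}_{\ge5})\) when \(m\ge5\) and \((\mathrm{Sch}_3)\) when \(m=3\). The color classes \(E_0,\dots,E_4\) defined by \((E_c)\) partition the directed Cayley arc set, because at every vertex \(x\) the row \(c\mapsto d_{\sigma(x)}(\iota_{\sigma(x)}x,c)\) is a permutation of \(\Zfive\). This is recorded in Section~\ref{sec:schedules}. So it remains to show that each \(E_c\) is a directed Hamilton cycle.

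Second, I will check the hypothesis of Lemma~\ref{lem:return}. The constant layers \(P_{t,c}\) are translations by \(q_{d}\), so they are bijections. The non-constant layer \(P_{1,c}=P_c\) is a bijection by Lemma~\ref{lem:matching}, which rests on the exact-cover certificate of Lemma~\ref{lem:exactcover} and on the equivariance \((P_c)\). Lemma~\ref{lem:return} then reduces the problem to showing that each \(R_c\) is a single \(m^4\)-cycle on \(A_m\).

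Third, I will pass to normalized returns. By Section~\ref{sec:normalizing}, \(R_c\) is conjugate by a translation to \(G_c\), for \(m\ge5\) and for \(m=3\) alike. By \((\mathrm{CG})\), \(G_c=\rho_cG_0\rho_c^{-1}\). Conjugation preserves cycle type, so every \(R_c\) has the same cycle type as \(G=G_0\).

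For \(m\ge5\), Lemma~\ref{lem:cycle} states that \(G\) is one \(m^4\)-cycle. That lemma itself combines the first-return table (Lemma~\ref{lem:firstreturn}), the single-cycle property of \(\Phi\) (Lemma~\ref{lem:PhiCycle}), the total excursion length \(m^4\) (Lemma~\ref{lem:total}) and the splice lemma (Lemma~\ref{lem:splice}). For \(m=3\), Lemma~\ref{lem:m3certificate} gives the conclusion directly for every \(R_c\).

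Fourth, I will lift back to the torus. Each \(R_c\) is a single \(m^4\)-cycle and all layer maps are bijections, so Lemma~\ref{lem:return} shows that each \(E_c\) is a directed Hamilton cycle of length \(m^5\). The partition of arcs from the first step then gives the Hamilton decomposition.

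Since all the substantive work sits in the earlier lemmas, the only step needing care is the bookkeeping in the third step. I have to confirm that the translation conjugation and the rotation conjugation \((\mathrm{CG})\) are consistent in both schedules. In particular, for \(m=3\) the displayed formula \((G_c)\) uses \(-3q_c=0\). I also have to confirm that the bijectivity in Lemma~\ref{lem:matching} covers every \(P_{t,c}\) in both schedules, and not only \(P_0\), before Lemma~\ref{lem:return} is invoked.
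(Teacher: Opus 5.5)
Your proposal is correct and follows the paper's proof essentially step for step. The Latin outgoing rows give the arc partition, and Lemma~\ref{lem:matching} makes every layer bijective. Each \(R_c\) is conjugated to \(G_c\) and then, via \((\mathrm{CG})\), to \(G_0\). Lemma~\ref{lem:cycle} (for \(m\ge5\)) or Lemma~\ref{lem:m3certificate} (for \(m=3\)), together with Lemma~\ref{lem:return}, then finishes the argument.

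The bookkeeping you flag is already settled in Section~\ref{sec:normalizing}. Both schedules yield the same formula \((G_c)\), and \((\mathrm{CG})\) depends only on that formula together with \((P_c)\) and \((\mathrm{Rot})\), so it holds uniformly for all odd \(m\ge3\).
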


\begin{proof}
The schedules define a Latin outgoing coloring because each row \(c\mapsto d_t(w,c)\) is a permutation
of the five directions.  Lemma~\ref{lem:matching} gives the incoming condition: for every color and
every layer, the layer map is a bijection.  Hence each color class is a directed 1-factor.

Since \(m\) is odd and \(m\ge3\), either \(m=3\) or \(m\ge5\).  If \(m=3\),
Lemma~\ref{lem:m3certificate} says that every color return \(R_c\) is one \(81\)-cycle.  Now assume
\(m\ge5\).  For each color \(c\),
the normalized return \(G_c\) is conjugate to \(R_c\).  Equation~\textup{(\(\mathrm{CG}\))} gives
\(G_c\rho_c=\rho_cG_0\), and Lemma~\ref{lem:cycle} says that \(G_0=G\) is a single \(m^4\)-cycle.
Therefore every \(G_c\), and hence every \(R_c\), is a single \(m^4\)-cycle.

In both cases, Lemma~\ref{lem:return} lifts each color class to one directed Hamilton cycle of length
\(m^5\).  The explicit arc sets \(E_c\) in \textup{(\(E_c\))} partition the directed Cayley arc set, so
these five Hamilton cycles form a Hamilton decomposition.
\end{proof}

\section{Discussion}\label{sec:discussion}

The relationship with the directed \(3\)-torus result~\cite{d3torus} is structural.  In dimension
three the reduced return is an odometer or finite-defect odometer.  In dimension five the same
primitiveity mechanism is carried by the zero-set changes, the two seam branches of \(\Theta\), and
the long wrap skeleton.

The odd-modulus hypothesis enters through the half-turn parameter \(m=2h+1\) and the row-cycle
computation for \(\Phi\).  Even moduli and higher-dimensional analogues require different tables or
splice mechanisms.  Ancillary search code for such finite zero-set tables and return sections is
included in~\cite{D5OddLean}.

\appendix

\section{Expanded layer-1 matching certificate}\label{app:matching-certificate}

This appendix expands the finite certificate used in Lemma~\ref{lem:matching}.  The certificate
separates the infinite parameter \(m\) from the finite zero-pattern bookkeeping.  The only symbols
that enter the bookkeeping are
\[
 y_j=0,\qquad y_j=1,\qquad y_j=-1.
\]
For odd \(m\ge3\), these three values are distinct whenever they are different as symbols.

Table~\ref{tab:selector} records the selector values generated by the cyclic extension of Table~\ref{tab:lambda}.

\begin{table}[h]
\centering
\caption{The color-0 selector \(p(Z)=\Lambda_1(Z-1)(0)\) on all feasible root-flat zero-sets.}\label{tab:selector}
\small
\begin{tabular}{c c@{\qquad}c c@{\qquad}c c}
\toprule
\(Z\) & \(p(Z)\) & \(Z\) & \(p(Z)\) & \(Z\) & \(p(Z)\) \\
\midrule
$\varnothing$ & $0$ & $\{0\}$ & $0$ & $\{1\}$ & $0$ \\
$\{2\}$ & $0$ & $\{3\}$ & $4$ & $\{4\}$ & $1$ \\
$\{0,1\}$ & $0$ & $\{0,2\}$ & $0$ & $\{0,3\}$ & $2$ \\
$\{0,4\}$ & $1$ & $\{1,2\}$ & $4$ & $\{1,3\}$ & $4$ \\
$\{1,4\}$ & $1$ & $\{2,3\}$ & $1$ & $\{2,4\}$ & $3$ \\
$\{3,4\}$ & $4$ & $\{0,1,2\}$ & $4$ & $\{0,1,3\}$ & $2$ \\
$\{0,1,4\}$ & $1$ & $\{0,2,3\}$ & $2$ & $\{0,2,4\}$ & $3$ \\
$\{0,3,4\}$ & $1$ & $\{1,2,3\}$ & $1$ & $\{1,2,4\}$ & $4$ \\
$\{1,3,4\}$ & $4$ & $\{2,3,4\}$ & $3$ & $\{0,1,2,3,4\}$ & $0$ \\
\bottomrule
\end{tabular}
\end{table}

For a target \(y\in A_m\), a predecessor through direction \(i\) is \(w=y-q_i\).  Hence the condition
that the predecessor has zero-set \(Z\) is the signature condition displayed below.  The row contributes
to the matching exactly when \(i=p(Z)\).  Thus the rows of Table~\ref{tab:cellcertificate} are the
actual image cells
\[
\mathcal C_{Z,p(Z)}=\{y\in A_m: Z(y-q_{p(Z)})=Z\}.
\]

\begin{longtable}{c c p{0.35\textwidth} p{0.35\textwidth}}
\caption{The 27 image-cell signatures for the map \(P(w)=w+q_{p(Z(w))}\).  Equalities and inequalities are read inside \(A_m\).}\label{tab:cellcertificate}\\
\toprule
\(Z\) & \(p(Z)\) & forced equalities & forbidden equalities \\
\midrule
\endfirsthead
\multicolumn{4}{c}{\tablename\ \thetable{} -- continued from previous page}\\
\toprule
\(Z\) & \(p(Z)\) & forced equalities & forbidden equalities \\
\midrule
\endhead
\(\varnothing\) & 0 & \(\text{none}\) & \(\begin{array}{l}y_0\ne1\\ y_4\ne-1\\ y_1\ne0\\ y_2\ne0\\ y_3\ne0\end{array}\) \\
\(\{0\}\) & 0 & \(\begin{array}{l}y_0=1\end{array}\) & \(\begin{array}{l}y_4\ne-1\\ y_1\ne0\\ y_2\ne0\\ y_3\ne0\end{array}\) \\
\(\{1\}\) & 0 & \(\begin{array}{l}y_1=0\end{array}\) & \(\begin{array}{l}y_0\ne1\\ y_4\ne-1\\ y_2\ne0\\ y_3\ne0\end{array}\) \\
\(\{2\}\) & 0 & \(\begin{array}{l}y_2=0\end{array}\) & \(\begin{array}{l}y_0\ne1\\ y_4\ne-1\\ y_1\ne0\\ y_3\ne0\end{array}\) \\
\(\{3\}\) & 4 & \(\begin{array}{l}y_3=0\end{array}\) & \(\begin{array}{l}y_0\ne0\\ y_1\ne0\\ y_2\ne0\\ y_4\ne0\end{array}\) \\
\(\{4\}\) & 1 & \(\begin{array}{l}y_4=-1\end{array}\) & \(\begin{array}{l}y_1\ne1\\ y_0\ne0\\ y_2\ne0\\ y_3\ne0\end{array}\) \\
\(\{0,1\}\) & 0 & \(\begin{array}{l}y_0=1\\ y_1=0\end{array}\) & \(\begin{array}{l}y_4\ne-1\\ y_2\ne0\\ y_3\ne0\end{array}\) \\
\(\{0,2\}\) & 0 & \(\begin{array}{l}y_0=1\\ y_2=0\end{array}\) & \(\begin{array}{l}y_4\ne-1\\ y_1\ne0\\ y_3\ne0\end{array}\) \\
\(\{0,3\}\) & 2 & \(\begin{array}{l}y_0=0\\ y_3=0\end{array}\) & \(\begin{array}{l}y_2\ne1\\ y_4\ne-1\\ y_1\ne0\end{array}\) \\
\(\{0,4\}\) & 1 & \(\begin{array}{l}y_4=-1\\ y_0=0\end{array}\) & \(\begin{array}{l}y_1\ne1\\ y_2\ne0\\ y_3\ne0\end{array}\) \\
\(\{1,2\}\) & 4 & \(\begin{array}{l}y_1=0\\ y_2=0\end{array}\) & \(\begin{array}{l}y_0\ne0\\ y_3\ne0\\ y_4\ne0\end{array}\) \\
\(\{1,3\}\) & 4 & \(\begin{array}{l}y_1=0\\ y_3=0\end{array}\) & \(\begin{array}{l}y_0\ne0\\ y_2\ne0\\ y_4\ne0\end{array}\) \\
\(\{1,4\}\) & 1 & \(\begin{array}{l}y_1=1\\ y_4=-1\end{array}\) & \(\begin{array}{l}y_0\ne0\\ y_2\ne0\\ y_3\ne0\end{array}\) \\
\(\{2,3\}\) & 1 & \(\begin{array}{l}y_2=0\\ y_3=0\end{array}\) & \(\begin{array}{l}y_1\ne1\\ y_4\ne-1\\ y_0\ne0\end{array}\) \\
\(\{2,4\}\) & 3 & \(\begin{array}{l}y_4=-1\\ y_2=0\end{array}\) & \(\begin{array}{l}y_3\ne1\\ y_0\ne0\\ y_1\ne0\end{array}\) \\
\(\{3,4\}\) & 4 & \(\begin{array}{l}y_3=0\\ y_4=0\end{array}\) & \(\begin{array}{l}y_0\ne0\\ y_1\ne0\\ y_2\ne0\end{array}\) \\
\(\{0,1,2\}\) & 4 & \(\begin{array}{l}y_0=0\\ y_1=0\\ y_2=0\end{array}\) & \(\begin{array}{l}y_3\ne0\\ y_4\ne0\end{array}\) \\
\(\{0,1,3\}\) & 2 & \(\begin{array}{l}y_0=0\\ y_1=0\\ y_3=0\end{array}\) & \(\begin{array}{l}y_2\ne1\\ y_4\ne-1\end{array}\) \\
\(\{0,1,4\}\) & 1 & \(\begin{array}{l}y_1=1\\ y_4=-1\\ y_0=0\end{array}\) & \(\begin{array}{l}y_2\ne0\\ y_3\ne0\end{array}\) \\
\(\{0,2,3\}\) & 2 & \(\begin{array}{l}y_2=1\\ y_0=0\\ y_3=0\end{array}\) & \(\begin{array}{l}y_4\ne-1\\ y_1\ne0\end{array}\) \\
\(\{0,2,4\}\) & 3 & \(\begin{array}{l}y_4=-1\\ y_0=0\\ y_2=0\end{array}\) & \(\begin{array}{l}y_3\ne1\\ y_1\ne0\end{array}\) \\
\(\{0,3,4\}\) & 1 & \(\begin{array}{l}y_4=-1\\ y_0=0\\ y_3=0\end{array}\) & \(\begin{array}{l}y_1\ne1\\ y_2\ne0\end{array}\) \\
\(\{1,2,3\}\) & 1 & \(\begin{array}{l}y_1=1\\ y_2=0\\ y_3=0\end{array}\) & \(\begin{array}{l}y_4\ne-1\\ y_0\ne0\end{array}\) \\
\(\{1,2,4\}\) & 4 & \(\begin{array}{l}y_1=0\\ y_2=0\\ y_4=0\end{array}\) & \(\begin{array}{l}y_0\ne0\\ y_3\ne0\end{array}\) \\
\(\{1,3,4\}\) & 4 & \(\begin{array}{l}y_1=0\\ y_3=0\\ y_4=0\end{array}\) & \(\begin{array}{l}y_0\ne0\\ y_2\ne0\end{array}\) \\
\(\{2,3,4\}\) & 3 & \(\begin{array}{l}y_3=1\\ y_4=-1\\ y_2=0\end{array}\) & \(\begin{array}{l}y_0\ne0\\ y_1\ne0\end{array}\) \\
\(\{0,1,2,3,4\}\) & 0 & \(\begin{array}{l}y_0=1\\ y_4=-1\\ y_1=0\\ y_2=0\\ y_3=0\end{array}\) & \(\text{none}\) \\
\bottomrule
\end{longtable}

To illustrate how Table~\ref{tab:cellcertificate} is read, take the row \(Z=\{0,3\}\).  Table~\ref{tab:selector}
gives \(p(Z)=2\), and \(q_2=e_2-e_4\).  Thus \(w=y-q_2\) has
\[
w_2=y_2-1,
\qquad
w_4=y_4+1,
\qquad
w_j=y_j\quad(j\ne2,4).
\]
The condition \(Z(w)=\{0,3\}\) is therefore exactly
\[
y_0=0,
\qquad y_3=0,
\qquad y_2\ne1,
\qquad y_4\ne-1,
\qquad y_1\ne0,
\]
which is the corresponding row of Table~\ref{tab:cellcertificate}.

\begin{lemma}[Exact-cover form of the matching certificate]\label{lem:exactcoverappendix}
The 27 cells in Table~\ref{tab:cellcertificate} are pairwise disjoint and cover \(A_m\).  Equivalently,
for every \(y\in A_m\),
\[
\#\{i\in\mathbb Z_5:p(Z(y-q_i))=i\}=1.
\]
\end{lemma}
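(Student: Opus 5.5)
The plan is to reduce the exact-cover statement to a finite case analysis on the \emph{pattern} of \(y\), rather than on \(y\) itself. For \(y\in A_m\), the only information that determines which cells contain \(y\) is, for each coordinate \(j\), which of the three symbols \(0\), \(1\), \(-1\) the value \(y_j\) equals, or whether it equals none of them. Since \(m\ge3\) is odd, these three residues are distinct, so every \(y\) has a well-defined pattern
\[
\pi(y)\in\{0,1,-1,\ast\}^5 .
\]
Every forced and forbidden equality in Table~\ref{tab:cellcertificate} is a condition on \(\pi(y)\) alone. The two claims, disjointness and covering, are therefore statements about at most \(4^5\) patterns.

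I will organize the check coordinate by coordinate, rather than enumerating all patterns blindly, using the equivalent form
\[
\#\{i\in\Zfive:p(Z(y-q_i))=i\}=1 .
\]
For each \(i\), the five zero-sets \(Z(y-q_i)\) are determined from \(\pi(y)\) as follows:
\begin{itemize}[nosep]
\item for \(i\in\{0,1,2,3\}\), the set \(Z(y-q_i)\) is \(Z(y)\) with coordinate \(i\) tested by \(y_i=1\) instead of \(y_i=0\), and coordinate \(4\) tested by \(y_4=-1\) instead of \(y_4=0\);
\item for \(i=4\), the set \(Z(y-q_4)\) is simply \(Z(y)\), since \(q_4=0\).
\end{itemize}
The first split is on \(Z(y)\) via Table~\ref{tab:selector}. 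Whenever \(p(Z(y))=4\), the index \(i=4\) is selected, and I must then show that no \(i\le3\) is selected. Whenever \(p(Z(y))\ne4\), exactly one \(i\le 3\) must be selected. Within each branch, I further split on whether \(y_4=-1\), and on which \(y_i\) with \(i\le3\) equal \(1\). These are the only features that distinguish the shifted zero-sets from \(Z(y)\). With this splitting, each branch reduces to reading at most five entries of Table~\ref{tab:selector}.

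There is one root-flat constraint to respect: \(\sum y_j=0\). This makes some patterns infeasible; for instance, the pattern with four zeros and a nonzero fifth entry cannot occur. The size-four zero-sets are never listed, so in each branch I must check that any pattern which would produce one is in fact infeasible, or that the selector condition is not triggered. For example, a predecessor \(y-q_i\) with four zero coordinates must be the point \(0\). This is why the row \(Z=\{0,1,2,3,4\}\) appears with \(y=q_0=(1,0,0,0,-1)\).

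The main obstacle is bookkeeping rather than any conceptual difficulty. The patterns where several shifted coordinates hit the special values simultaneously (several \(y_i=1\) together with \(y_4=-1\)) produce several candidate zero-sets at once, and uniqueness must be verified there. I would handle these first by grouping patterns according to the set \(U=\{i\le3:y_i=1\}\) and the boolean \(y_4=-1\). For each group I check the selected \(i\) in a small table, using the cyclic structure inherited from \(\Lambda_1\) to cut down the cases. As a fallback, since the statement is finite and uniform in odd \(m\ge3\), the full check can be done by evaluating the 27 cell signatures on all feasible patterns. This is exactly the certificate the Lean formalization verifies.
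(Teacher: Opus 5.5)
Your proposal is correct and follows essentially the paper's route: both reduce (MC) to a finite Boolean check over the symbolic classes $0$, $1$, $-1$ and ``other'' of the coordinates of $y$, use the root-flat relation only to discard patterns that force a size-four zero-set (for $y$ or for a predecessor $y-q_i$), and then leave the finite comparison itself to be carried out by hand or mechanically. One small correction: the cyclic equivariance of $\Lambda_1$ relates different colors, not different zero-sets within color $0$, so it does not shrink this check; your branching on $Z(y)$, on whether $y_4=-1$, and on $\{i\le 3: y_i=1\}$ still works directly, and the only symbolic patterns with count different from $1$ are exactly those infeasible size-four ones, so $m=3$ needs no separate enumeration.
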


\begin{proof}
This lemma is the finite certificate underlying Lemma~\ref{lem:matching}.  The pairwise-disjointness
and exhaustion assertions are finite Boolean claims over the displayed signatures.  We spell out the
finite predicates that constitute the certificate.

For each feasible zero-set \(Z\), Table~\ref{tab:cellcertificate} is obtained by substituting
\(i=p(Z)\) into the identity \(w=y-q_i\).  If \(i<4\), then
\[
w_i=y_i-1,
\qquad
w_4=y_4+1,
\qquad
w_j=y_j\quad(j\notin\{i,4\}),
\]
which gives exactly the equalities and inequalities listed in the row.  If \(i=4\), then \(q_4=0\),
so the row is simply the ordinary zero-pattern condition \(Z(y)=Z\).

Thus the row labelled \(Z\) is exactly the predicate
\[
p(Z(y-q_{p(Z)}))=p(Z)
\quad\hbox{and}\quad
Z(y-q_{p(Z)})=Z.
\]
Consequently two different rows cannot both hold unless two different predecessor tests
\(p(Z(y-q_i))=i\) hold.  The finite comparison encoded by Table~\ref{tab:cellcertificate} verifies that no two of the 27
displayed predicates are simultaneously satisfiable.

For exhaustion, take \(y\in A_m\) and test the five possible predecessors \(y-q_i\).  Each
\(y-q_i\) is again in \(A_m\), so its zero-set is one of the feasible zero-sets of sizes
\(0,1,2,3,5\); a zero-set of size four cannot occur in the root flat.  The exhaustion check is the
finite assertion that among these five predecessor tests exactly one satisfies \(p(Z(y-q_i))=i\).
Equivalently, exactly one of the 27 row predicates contains \(y\).

This comparison is finite because the predicates only distinguish the coordinate classes
\(0\), \(1\), \(-1\), and values different from all three; for \(m=3\) the last class is empty and the
same assertion is checked by direct enumeration of \(A_3\).  The same finite comparison is automated
in the ancillary file recorded in Appendix~\ref{app:formal}.
\end{proof}

\begin{remark}[Certificate status of the matching table]\label{rem:matching-formal}
The proof uses Table~\ref{tab:cellcertificate} as a finite exact-cover certificate.  No search over
possible tables is hidden in the argument.  The table is obtained by expanding the seven representative
rows of \(\Lambda_1\) cyclically and then checking the exact-cover condition stated in
Lemma~\ref{lem:exactcoverappendix}.  The verification is a finite comparison of the displayed
signatures, equivalently the assertion that every root-compatible symbolic target has exactly one
valid predecessor direction.
\end{remark}

\section{The \texorpdfstring{\(m=3\)}{m=3} finite certificate}\label{app:m3certificate}

The structural first-return proof in Sections~\ref{sec:first-return}--\ref{sec:induced-cycle} is
stated for \(m\ge5\).  The exceptional modulus \(m=3\) is certified by the explicit cycle below.  Write
each point of \(A_3\) by its first four coordinates \((w_0,w_1,w_2,w_3)\); the fifth coordinate is
\[
w_4=-w_0-w_1-w_2-w_3\pmod 3.
\]
Let \(\alpha_r\) be the root-flat point represented by the \(r\)-th entry of
Table~\ref{tab:m3cycle}, with indices read modulo \(81\).

\begingroup
\small
\begin{longtable}{r@{\;}c@{\qquad}r@{\;}c@{\qquad}r@{\;}c}
\caption{Explicit \(81\)-cycle certificate for the normalized return \(G\) on \(A_3\).  Each entry
shows the first four coordinates of \(\alpha_r\); the fifth coordinate is recovered from the root-flat
relation.}\label{tab:m3cycle}\\
\toprule
\(r\) & \((w_0,w_1,w_2,w_3)\) & \(r\) & \((w_0,w_1,w_2,w_3)\) & \(r\) & \((w_0,w_1,w_2,w_3)\) \\
\midrule
\endfirsthead
\multicolumn{6}{c}{\tablename\ \thetable{} -- continued from previous page}\\
\toprule
\(r\) & \((w_0,w_1,w_2,w_3)\) & \(r\) & \((w_0,w_1,w_2,w_3)\) & \(r\) & \((w_0,w_1,w_2,w_3)\) \\
\midrule
\endhead
\(0\) & \((0,0,0,0)\) & \(1\) & \((1,0,0,1)\) & \(2\) & \((1,0,0,2)\) \\
\(3\) & \((1,0,0,0)\) & \(4\) & \((1,1,0,1)\) & \(5\) & \((1,1,0,0)\) \\
\(6\) & \((1,2,0,1)\) & \(7\) & \((2,2,0,2)\) & \(8\) & \((2,2,0,1)\) \\
\(9\) & \((0,2,0,2)\) & \(10\) & \((1,2,0,0)\) & \(11\) & \((1,2,0,2)\) \\
\(12\) & \((2,2,0,0)\) & \(13\) & \((2,0,0,1)\) & \(14\) & \((2,0,0,2)\) \\
\(15\) & \((2,0,0,0)\) & \(16\) & \((2,1,0,1)\) & \(17\) & \((0,1,0,2)\) \\
\(18\) & \((0,1,0,1)\) & \(19\) & \((1,1,0,2)\) & \(20\) & \((2,1,0,0)\) \\
\(21\) & \((2,1,0,2)\) & \(22\) & \((0,1,0,0)\) & \(23\) & \((0,1,1,1)\) \\
\(24\) & \((0,2,1,2)\) & \(25\) & \((1,2,1,0)\) & \(26\) & \((1,2,1,1)\) \\
\(27\) & \((2,2,1,2)\) & \(28\) & \((0,2,1,0)\) & \(29\) & \((0,0,1,1)\) \\
\(30\) & \((1,0,1,2)\) & \(31\) & \((2,0,1,0)\) & \(32\) & \((2,0,1,1)\) \\
\(33\) & \((0,0,1,2)\) & \(34\) & \((0,1,1,0)\) & \(35\) & \((0,1,2,1)\) \\
\(36\) & \((1,1,2,2)\) & \(37\) & \((1,2,2,0)\) & \(38\) & \((1,2,2,1)\) \\
\(39\) & \((1,0,2,2)\) & \(40\) & \((2,0,2,0)\) & \(41\) & \((2,0,2,1)\) \\
\(42\) & \((0,0,2,2)\) & \(43\) & \((1,0,2,0)\) & \(44\) & \((1,0,2,1)\) \\
\(45\) & \((2,0,2,2)\) & \(46\) & \((2,1,2,0)\) & \(47\) & \((2,1,2,1)\) \\
\(48\) & \((2,2,2,2)\) & \(49\) & \((0,2,2,0)\) & \(50\) & \((0,2,0,1)\) \\
\(51\) & \((0,2,0,0)\) & \(52\) & \((0,2,1,1)\) & \(53\) & \((1,2,1,2)\) \\
\(54\) & \((1,0,1,0)\) & \(55\) & \((1,0,1,1)\) & \(56\) & \((1,1,1,2)\) \\
\(57\) & \((2,1,1,0)\) & \(58\) & \((2,1,1,1)\) & \(59\) & \((0,1,1,2)\) \\
\(60\) & \((1,1,1,0)\) & \(61\) & \((1,1,1,1)\) & \(62\) & \((2,1,1,2)\) \\
\(63\) & \((2,2,1,0)\) & \(64\) & \((2,2,1,1)\) & \(65\) & \((2,0,1,2)\) \\
\(66\) & \((0,0,1,0)\) & \(67\) & \((0,0,2,1)\) & \(68\) & \((0,1,2,2)\) \\
\(69\) & \((1,1,2,0)\) & \(70\) & \((1,1,2,1)\) & \(71\) & \((2,1,2,2)\) \\
\(72\) & \((0,1,2,0)\) & \(73\) & \((0,2,2,1)\) & \(74\) & \((1,2,2,2)\) \\
\(75\) & \((2,2,2,0)\) & \(76\) & \((2,2,2,1)\) & \(77\) & \((0,2,2,2)\) \\
\(78\) & \((0,0,2,0)\) & \(79\) & \((0,0,0,1)\) & \(80\) & \((0,0,0,2)\) \\
\bottomrule
\end{longtable}
\endgroup

The certificate asserts the following finite identities:
\[
G(\alpha_r)=\alpha_{r+1}\qquad(0\le r\le80),
\]
where \(\alpha_{81}=\alpha_0\).  These identities are obtained by substituting the displayed
coordinates into the formula
\[
G(w)=w+(-3,0,0,1,1)+e_{p(Z(w))}
\]
and reading \(p\) from Table~\ref{tab:lambda}.  For example,
\(\alpha_0=(0,0,0,0,0)\) has \(Z(\alpha_0)=\mathbb Z_5\), so \(p=0\), and
\[
G(\alpha_0)=\alpha_0+(-3,0,0,1,1)+e_0\equiv (1,0,0,1,1)=\alpha_1\qquad\text{in }A_3.
\]
The 81 displayed first-four-coordinate tuples are pairwise distinct, hence they exhaust
\((\mathbb Z_3)^4\), and therefore the corresponding \(\alpha_r\) exhaust \(A_3\).  Consequently
\(G\) is a single \(81\)-cycle on \(A_3\).  The affine color conjugacy in Section~\ref{sec:normalizing}
then transfers this certificate to all five color returns for the schedule \((\mathrm{Sch}_3)\).

\section{Formal companion and ancillary files}\label{app:formal}

The paper proof above is self-contained: the matching certificate is the finite table of
Appendix~\ref{app:matching-certificate}, and the exceptional \(m=3\) branch is the explicit cycle of
Appendix~\ref{app:m3certificate}.  The Lean development cited in~\cite{D5OddLean} is an independent
machine verification of the same Cayley-edge statement.

The top-level theorem in the cited artifact is
\[
\texttt{D5Odd.D5\_odd\_cayley\_unconditional}.
\]
It states the Hamilton decomposition for the directed Cayley graph
\[
\Cay((\mathbb Z_m)^5,\{e_0,e_1,e_2,e_3,e_4\})
\]
with the same orientation convention as Theorem~\ref{thm:main}.  The cited version uses Lean
\texttt{v4.30.0-rc2}, mathlib
\texttt{v4.30.0-rc2}, and repository commit
\[
\texttt{fe67dbd0cb7af889936cc28660a88e5651daed62}.
\]
The command
\[
\texttt{lake build D5Odd}
\]
checks the theorem and the finite certificate branches.  The same finite comparison used in
Appendix~\ref{app:matching-certificate} is automated in \texttt{scripts/MatchingCheck.lean}.  The
repository also contains optional audit scripts and ancillary search code; these are not invoked in
the symbolic first-return proof for \(m\ge5\).  A pinned \texttt{lake-manifest.json} fixes the mathlib
dependency, and the published version will be accompanied by an arXiv source bundle and a tagged
GitHub release.

\section*{Acknowledgments and AI disclosure}

This paper was prepared with assistance from large language models.  Anthropic Claude Opus 4.7
contributed to exposition and writing polish; OpenAI GPT-5.5 Pro contributed to proof exploration,
including selector design and block-recurrence case analysis; and OpenAI GPT-5.5 Codex contributed
to drafting the Lean 4 formalization in~\cite{D5OddLean}.  All mathematical content in this paper
and all theorems in the Lean development have been verified by the author, with the Lean type-checker
providing an independent machine check of the main theorem and the finite certificates of
Appendices~\ref{app:matching-certificate} and~\ref{app:m3certificate}.


\begin{thebibliography}{99}

\bibitem{AlspachBermondSotteau1990}
B.~Alspach, J.-C.~Bermond, and D.~Sotteau,
\newblock Decomposition into cycles I: Hamilton decompositions,
\newblock in \emph{Cycles and Rays} (G.~Hahn, G.~Sabidussi, R.~E.~Woodrow, eds.),
NATO ASI Series, vol.~301, Kluwer, 1990, pp.~9--18.

\bibitem{AquinoMichaels2026}
K.~Aquino-Michaels,
\newblock Completing Claude's cycles,
\newblock Preprint, March 2026.
\newblock \url{https://github.com/no-way-labs/residue}, accessed 29 April 2026.

\bibitem{CurranGallian1996}
S.~J.~Curran and J.~A.~Gallian,
\newblock Hamiltonian cycles and paths in Cayley graphs and digraphs --- a survey,
\newblock \emph{Discrete Mathematics} 156 (1996), 1--18.

\bibitem{CurranWitte1985}
S.~J.~Curran and D.~Witte,
\newblock Hamilton paths in Cartesian products of directed cycles,
\newblock in \emph{Cycles in Graphs}, Annals of Discrete Mathematics 27 (1985), 35--74.

\bibitem{Foregger1978}
M.~F.~Foregger,
\newblock Hamiltonian decompositions of products of cycles,
\newblock \emph{Discrete Mathematics} 24 (1978), 251--260.

\bibitem{Keating1985}
K.~Keating,
\newblock Multiple-ply Hamiltonian graphs and digraphs,
\newblock in \emph{Cycles in Graphs}, \emph{Annals of Discrete Mathematics} 27 (1985), 81--88.

\bibitem{Knuth2026}
D.~E.~Knuth,
\newblock Claude's cycles,
\newblock Preprint, March 2026.
\newblock \url{https://www-cs-faculty.stanford.edu/~knuth/papers/claude-cycles.pdf}, accessed 29 April 2026.

\bibitem{d3torus}
S.~Park,
\newblock Hamilton decompositions of the directed 3-torus: a return-map and odometer view,
\newblock arXiv:2603.24708, 2026.

\bibitem{D5OddLean}
S.~Park,
\newblock Torus Hamilton Decomposition Program,
\newblock Lean 4 formalization repository, ancillary audit files, and empirical search code
(including even-modulus exploration), 2026.
\newblock Commit \texttt{fe67dbd0cb7af889936cc28660a88e5651daed62};
\newblock \url{https://github.com/aria1th/Torus-Hamilton-Decomposition-Program}, accessed 29 April 2026.

\bibitem{TrotterErdos1978}
W.~T.~Trotter and P.~Erd\H{o}s,
\newblock When the Cartesian product of directed cycles is Hamiltonian,
\newblock \emph{Journal of Graph Theory} 2 (1978), 137--142.

\bibitem{WitteGallian1984}
D.~Witte and J.~A.~Gallian,
\newblock A survey: Hamiltonian cycles in Cayley graphs,
\newblock \emph{Discrete Mathematics} 51 (1984), 293--304.

\end{thebibliography}
\end{document}